\def\subjclass#1{{\renewcommand{\thefootnote}{}%
\footnote{\emph{Mathematics Subject Classification (2020):} #1}}}
\DeclareMathOperator{\divg}{div}
\date{\today}
\theoremstyle{plain}
\newtheorem{Thm}{Theorem}[section]
\newtheorem{Lem}{Lemma}[section]
\newtheorem{Assum}{Assumption}[section]
\newtheorem{Prop}{Proposition}[section]
\newtheorem{Rem}{Remark}[section]
\newtheorem{Cor}{Corollary}[section]
\def\0{\mathbf 0}
\def\v{\vskip}
\numberwithin{equation}{section}
\numberwithin{Thm}{section}
\begin{document}
\large

\title[Liouville type theorems]
{Some new Liouville type theorems for 3D steady tropical climate model}

\author{Yanyan Dong}

\address{Yanyan Dong: School of Mathematics and Statistics, Anqing Normal University, Anqing 246133, China}
\email{2981165080@qq.com}

\author{Yan Fang}

\address{Yan Fang: School of Mathematics and Statistics, Anqing Normal University, Anqing 246133, China}
\email{3198132434@qq.com}

\author{Zhibing Zhang}

\address{Zhibing Zhang: School of Mathematics and Statistics, Key Laboratory of Modeling, Simulation and Control of Complex Ecosystem in Dabie Mountains of Anhui Higher Education Institutes, Anqing Normal University, Anqing 246133, China}
\email{zhibingzhang29@126.com}%

\thanks{}

\keywords{Liouville type theorems, tropical climate model, power-law growth conditions, logarithmic improvement}

\subjclass{35B53, 35Q35, 35A02}

\begin{abstract}
In this paper, we establish two major classes of Liouville type results for the three-dimensional stationary tropical climate model.
The first class is obtained under the assumptions imposed on $u,v,\theta$ whereas the second one relies on the assumptions imposed on $u,v,\nabla\theta$.
Using the energy method and an iteration argument, we obtain Liouville type theorems under the condition that Lebesgue norms of the smooth solutions on the annulus satisfy some power-law growth conditions. As a consequence, we show that a smooth solution is trivial provided that it belongs to some Lebesgue spaces or satisfies some decay conditions at infinity. Furthermore, with the aid of a contradiction argument and by developing a systematic framework to handle the energy function associated with the non-trivial solutions, we obtain a logarithmic improvement of our Liouville type theorems. Our new framework is very effective for establishing logarithmic improvements of Liouville type theorems for coupled fluid equations.
\end{abstract}
\maketitle

\tableofcontents

\section{Introduction}
We consider the following stationary tropical climate model in $\mathbb{R}^3$:
\begin{equation}\label{equ1.1}
  \left\{
    \begin{array}{ll}
     -\Delta u+(u\cdot\nabla) u+\nabla \pi+\divg(v\otimes v)=0,  \\
   -\Delta v+(u\cdot\nabla) v+\nabla \theta+(v\cdot\nabla)u=0, \\
   -\Delta \theta+u\cdot\nabla\theta+\divg v=0,\\
     \divg u=0,
\end{array}
  \right.
\end{equation}
where $u= (u_1 , u_2 , u_3 )$ is the barotropic mode, $v= (v_1 , v_2 , v_3 )$ is the first baroclinic mode of vector velocity, $\theta$ is the temperature, and $\pi$ is the pressure.
When $v=0$ and $\theta$ is a constant, \eqref{equ1.1} is reduced to the Navier-Stokes equations.
The tropical climate model is derived by Frierson, Majda and Pauluis \cite{FMP04} for large-scale dynamics of precipitation fronts in the tropical atmosphere.
More relevant background on the tropical climate model can be found in \cite{Majda03,MB03} and the references therein.
Mathematically, \eqref{equ1.1} exhibits a distinctive structure arising from the coupling between the divergence-free vector field $u$ and the non-divergence-free vector field $v$. Such mixed coupling structure brings essential mathematical difficulties and analytical challenges.

In recent years, Liouville type theorems for the fluid equations have attracted increasing attention. There are many interesting Liouville type results for the Navier-Stokes equations.
In this paper, we only refer to several results relevant to ours. For instance, Galdi  \cite{Galdi} proved  that $u$ is a trivial solution under the assumption that $u \in L^{\frac{9}{2}}(\mathbb{R}^{3})$. Subsequently, Chae and Wolf \cite{CW16} made a logarithmic improvement to Galdi's result by assuming that $u$ satisfies
$$\int_{\mathbb{R}^{3}}|u|^{\frac{9}{2}}\left\{\log\left(2+\frac{1}{|u|}\right)\right\}^{-1}dx<+\infty.$$
Seregin \cite{Seregin18} handled the pressure term by using a Bogovskii map on the ball, and proved that $u$ is trivial if $u$ fulfils
$$\displaystyle\sup_{R>0}R^{\beta}\left(\frac{1}{|B_{R}|}\int_{B_{R}}|u|^pdx\right)^{\frac{1}{p}}<+\infty, ~~~~~~~~~~~~with~~~~~~~~~~~~\frac{3}{2}<p<3~~~~~~~~~~~~and~~~~~~~~~~~~\beta>\frac{6p-3}{8p-6}.$$
Recently, Cho et al. \cite{CNY24} got an improvement on the result of Seregin. They showed that $u$ is trivial if $u$ satisfies one of the following conditions
$$
\aligned
&(\mathrm{i})\liminf_{R\rightarrow+\infty}R^{-\left(\frac{2}{p}-\frac{1}{3}\right)}\|u\|_{L^p(B_{2R}\backslash B_R)}<+\infty,\;\frac{3}{2}<p<3,\\
&(\mathrm{ii})\liminf_{R\rightarrow+\infty}R^{-\frac{1}{3}}\|u\|_{L^3(B_{2R}\backslash B_R)}=0.
\endaligned
$$
Very recently, Cho-Yang \cite{CY25} got a logarithmic improvement of Liouville type results. Specifically, they proved the triviality of the smooth solution if
$$
\limsup_{R\rightarrow+\infty}\frac{\|u\|_{L^p(B_{2R}\backslash B_\frac{3R}{2})}}{R^{\frac{2}{p}-\frac{1}{3}}(\ln R)^{\frac{3}{p}-1}}<+\infty,\;\frac{3}{2}<p<3.
$$
For more Liouville type results for the Navier-Stokes equations, see \cite{BGWX25,CPZ20,CPZZ20,Chae14,Chae25,Chae26,CW19,DJL21,CoY26,KNSS09,KTW17,Seregin16,SW19} and the references therein.

However, there are relatively few Liouville type results for the tropical climate model so far. Ding and Wu \cite{FW21} investigated the tropical climate model and proved Liouville type theorems under one of the following assumptions
$$
\aligned
&(\mathrm{i})~~~~u\in L^p\left(\mathbb{R}^{3}\right) ~~~~~~~~~~~~\text{and}~~~~~~~~~~~~v,\theta\in L^p\left(\mathbb{R}^{3}\right)\cap L^3\left(\mathbb{R}^{3}\right) ~~~~~~~~~~~~\text{with}~~~~~~~~~~~~ 3\leq p\leq \frac{9}{2},\\
&(\mathrm{ii})~~~~u\in L^{3}\left(\mathbb{R}^{3}\right), v \in L^{2}\left(\mathbb{R}^3\right)~~~~~~~~~~~~\text{and}~~~~~~~~~~~~\nabla u,\nabla v,\nabla\theta \in L^{2}\left(\mathbb{R}^3\right).
\endaligned
$$
Later, Cho et al. \cite{CIY23} improved the result of Ding and Wu by removing the condition $\nabla u,\nabla v \in L^{2}(\mathbb{R}^3)$ from the above assumption $(\mathrm{ii})$.
Recently, Cho et al. \cite{CIY24} established an improved Liouville type theorem for \eqref{equ1.1} under one of the following conditions
\begin{subequations}
\begin{align}
&(\mathrm{i})\;p\in\left(\frac{3}{2},3\right),\;q,r\in[1,2),\;\liminf_{R\rightarrow+\infty}[G(u;p,R)+H(v;q,R)+H(\theta;r,R)]<+\infty, \label{1.2a}\\
&(\mathrm{ii})\liminf_{R\rightarrow+\infty}[G(u;3,R)+H(v;2,R)]=0,\;\limsup_{R\rightarrow+\infty}H(\theta;2,R)<+\infty,\label{1.2b}\\
&(\mathrm{iii})\liminf_{R\rightarrow+\infty}[G(u;3,R)+H(\theta;2,R)]=0,\;\limsup_{R\rightarrow+\infty}H(v;2,R)<+\infty,\label{1.2c}
\end{align}
\end{subequations}
where the functions $G$ and $H$ are defined respectively by
$$ G(f;p,R):=R^{-\left(\frac{2}{p}-\frac{1}{3}\right)}\|f\|_{L^p(B_{2R}\backslash B_{\frac{R}{2}})}
~~~~~~~~~~~~and~~~~~~~~~~~~
H(f;p,R):=R^{-\left(\frac{3}{2p}-\frac{1}{4}\right)}\|f\|_{L^p(B_{2R}\backslash B_{\frac{R}{2}})}.
$$

Our aim of this paper is to extend and improve the recent works of \cite{CIY23,CIY24,CNY24,CY25}.
By means of the energy method and an iteration argument, we establish Liouville type theorems under the condition that Lebesgue norms of the smooth solutions on the annulus satisfy some power-law growth conditions. As a consequence, we show that a smooth solution is trivial provided that it belongs to some Lebesgue spaces or satisfies some decay conditions at infinity.
It is not difficult to verify that Liouville type theorem under \eqref{1.2a} is a special case of Theorem \ref{main1} while Liouville type theorem under \eqref{1.2b} or \eqref{1.2c} is a special case of Theorem \ref{main1} and Remark \ref{Rem3.1}(iii). Moreover, in our setting, the growth conditions are more flexible and the ranges of the parameters are broader.

On the other hand, using a contradiction argument and developing a systematic framework to handle the energy function associated with the non-trivial solutions, we extend Cho-Yang's logarithmic improvement of Liouville type theorems for the Navier-Stokes equations to \eqref{equ1.1}. In \cite{CY25}, Cho-Yang proved their logarithmic improvement result via a case-by-case analysis based on
\begin{align*}
\liminf_{R\rightarrow+\infty}\frac{\|u\|_{L^3(B_{2R}\backslash B_\frac{3R}{2})}}{R^{\frac{2}{3}-\frac{1}{p}}(\ln R)^{\frac{3}{p}-1}}<+\infty\text{ or }=+\infty.
\end{align*}
To the best of our knowledge, their method cannot be applied to establish Liouville type results of the logarithmic version for the fluid equations coupled with the Navier-Stokes equations. Our new approach avoids the case-by-case discussion and is very effective for the coupled fluid equations.

In this paper, we establish two major classes of Liouville type results. The first class is obtained under the assumptions imposed on $u,v,\theta$ whereas the second one relies on the assumptions imposed on $u,v,\nabla\theta$. In order to show our Liouville type results, we need to introduce some notations. Let $A_R$ denote the annulus $B_{2R}\backslash \overline{B_\frac{3R}{2}}$.  For simplicity, we denote
\begin{align*}
&X_{p,\alpha}(R)=R^{-\alpha}\|u\|_{L^p\left(A_{R}\right)},\;X_{p,\alpha,\lambda}(R)=R^{-\alpha}(\ln R)^{-\lambda}\|u\|_{L^p\left(A_{R}\right)},\\
&Y_{q,\beta}(R)=R^{-\beta}\|v\|_{L^q\left(A_{R}\right)},\;Y_{q,\beta,\mu}(R)=R^{-\beta}(\ln R)^{-\mu}\|v\|_{L^q\left(A_{R}\right)},\\
&Z_{r,\gamma}(R)=R^{-\gamma}\|\theta\|_{L^r\left(A_{R}\right)},\;Z_{r,\gamma,\nu}(R)=R^{-\gamma}(\ln R)^{-\nu}\|\theta\|_{L^r\left(A_{R}\right)},\\
&Z'_{r,\gamma}(R)=R^{-\gamma}\|\nabla\theta\|_{L^r\left(A_{R}\right)},\;Z'_{r,\gamma,\nu}(R)=R^{-\gamma}(\ln R)^{-\nu}\|\nabla\theta\|_{L^r\left(A_{R}\right)}.
\end{align*}

\subsection{Liouville type theorems under some assumptions imposed on $u,v,\theta$}
Let $p'$ denote the conjugate exponent to $p$, i.e., $p'=\frac{p}{p-1}$.
Before stating our first class of Liouville type results, we introduce some basic assumptions on the parameters $p,q,r$ and $\alpha$, $\beta$, $\gamma$.
\begin{Assum}\label{a1.1}
Let $(p,q,r)\in\left(\frac{3}{2},\frac{9}{2}\right]\times[1,6]\times[1,6]$ and $\alpha\in\left[0,\alpha_p\right]$, $\beta\in\left[0,\frac{3}{q}-\frac{1}{2}\right]$, $\gamma\in\left[0,\frac{3}{r}-\frac{1}{2}\right]$, where
\begin{equation*}
\alpha_p=
\begin{cases}
\frac{2}{p}-\frac{1}{3}, & \text{ if  }\frac{3}{2}<p<3, \\
\frac{3}{p}-\frac{2}{3}, & \text{ if  }3\leq p\leq\frac{9}{2}.
\end{cases}
\end{equation*}
\end{Assum}

\begin{Assum}\label{a1.2}
\begin{itemize}
		\item[(i)] When $(p,q)\in\left(\frac{3}{2},\frac{9}{2}\right]\times[1,6]$, it holds that
$$
\alpha+\frac{(4p-6)\min\{q,2p'\}}{(6-\min\{q,2p'\})p}\beta\leq\frac{3}{p}+\frac{6}{\max\{q,2p'\}}-2.
$$
In addition, when $(p,q)\in\left(\frac{3}{2},3\right)\times[2p',6]$, equality can not be achieved in the above inequality.
        \item[(ii)] When $(p,r)\in\left(\frac{3}{2},\frac{9}{2}\right]\times[1,6]$, it holds that
$$
\alpha+\frac{(4p-6)\min\{r,2p'\}}{(6-\min\{r,2p'\})p}\gamma\leq\frac{3}{p}+\frac{6}{\max\{r,2p'\}}-2.
$$
In addition, when $(p,r)\in\left(\frac{3}{2},3\right)\times[2p',6]$, equality can not be achieved in the above inequality.
\item[(iii)] When $(q,r)\in [1,6]\times[1,6]$, it holds that
$$
\frac{2\min\{q,2\}}{6-\min\{q,2\}}\beta+\frac{2\min\{r,2\}}{6-\min\{r,2\}}\gamma\leq\frac{3}{\max\{q,2\}}+\frac{3}{\max\{r,2\}}-2.
$$
In addition, when $(q,r)\in[2,6]\times[2,6]$, equality can not be achieved in the above inequality.
\end{itemize}
\end{Assum}

\begin{Assum}\label{a1.3}
Let $\lambda\in\left[0,\lambda_p\right]$, $\mu,\nu\geq0$, where
\begin{equation*}
\lambda_p=
\begin{cases}
\frac{3}{p}-1, & \text{ if  }\frac{3}{2}<p<3, \\
0, & \text{ if  }3\leq p\leq\frac{9}{2}.
\end{cases}
\end{equation*}
\end{Assum}

\begin{Assum}\label{a1.4}
\begin{itemize}
		\item[(i)] When $q\in[1,2p')$, it holds that
$$\lambda+\frac{(4p-6)q}{(6-q)p}\mu\leq\frac{6p-3q(p-1)}{(6-q)p}.$$
        \item[(ii)] When $r\in[1,2p')$, it holds that
$$\lambda+\frac{(4p-6)r}{(6-r)p}\nu\leq\frac{6p-3r(p-1)}{(6-r)p}.$$
        \item[(iii)] When $(q,r)\in [1,6]\times[1,2)$, or $[1,2)\times[2,6]$, it holds that
$$
\frac{2\min\{q,2\}}{6-\min\{q,2\}}\mu+\frac{2\min\{r,2\}}{6-\min\{r,2\}}\nu\leq\frac{6-3\min\{q,2\}}{12-2\min\{q,2\}}+\frac{6-3\min\{r,2\}}{12-2\min\{r,2\}}.
$$
\end{itemize}		
\end{Assum}

\begin{Assum}\label{a1.5}
\begin{itemize}
		\item[(i)] When $\beta=\frac{3}{q}-\frac{1}{2}$, we require $0\leq\mu\leq\frac{1}{2}$.
        \item[(ii)] When $(p,q)\in\left(\frac{3}{2},\frac{9}{2}\right]\times[2p',6]$, we require $\alpha+2\beta<\frac{3}{p}+\frac{6}{q}-2$.
        \item[(iii)] When $(p,q)\in\left(\frac{3}{2},3\right]\times[1,2p')$, we require $(\alpha,\beta)\neq\left(\frac{3}{p}-1,\frac{3}{q}-\frac{1}{2}\right)$.
        \item[(iv)] When $1\leq q<2$, we require $(r,\beta,\gamma)\neq(2,\frac{3}{q}-\frac{1}{2},0)$.
        \item[(v)] When $1\leq r<2$, we require $(q,\beta,\gamma)\neq(2,0,\frac{3}{r}-\frac{1}{2})$.
\end{itemize}
\end{Assum}

We are now in a position to state our first Liouville type theorem under the assumption that the Lebesgue norms of smooth solutions on the annulus satisfy certain power-law growth conditions.
\begin{Thm}\label{main1}
Let $(u,\pi,v,\theta)$ be a smooth solution of \eqref{equ1.1}. Suppose the parameters satisfy {\rm Assumptions \ref{a1.1} and \ref{a1.2}}. Furthermore, assume that one of the following assumptions holds
\begin{align*}
&\mathrm{(A1)}\;p\in\left[3,\frac{9}{2}\right],\;\liminf\limits_{R\rightarrow+\infty}X_{p,\alpha}(R)=0,\;\limsup\limits_{R\rightarrow+\infty}\left[Y_{q,\beta}(R)+Z_{r,\gamma}(R)\right]<+\infty;\\
&\mathrm{(A2)}\;p\in\left(\frac{3}{2},3\right),\;\liminf\limits_{R\rightarrow+\infty}\left[X_{p,\alpha}(R)+Y_{q,\beta}(R)+Z_{r,\gamma}(R)\right]<+\infty.
\end{align*}
Then $u=v=0$ and $\theta=0$.
\end{Thm}

Roughly speaking, we can improve Theorem \ref{main1} by relaxing the growth conditions to incorporate logarithmic factors.
\begin{Thm}\label{main2}
Let $(u,\pi,v,\theta)$ be a smooth solution of \eqref{equ1.1}. Suppose the parameters  satisfy {\rm Assumptions \ref{a1.1}, \ref{a1.2}, \ref{a1.3}, \ref{a1.4}} and {\rm\ref{a1.5}}.
Furthermore, assume that one of the following assumptions holds
\begin{align*}
&\mathrm{(B1)}\;p\in\left[3,\frac{9}{2}\right],\;\lim\limits_{R\rightarrow+\infty}X_{p,\alpha}(R)=0,\;\limsup\limits_{R\rightarrow+\infty}\left[Y_{q,\beta,\mu}(R)+Z_{r,\gamma,\nu}(R)\right]<+\infty;\\
&\mathrm{(B2)}\;p\in\left(\frac{3}{2},3\right),\;\limsup\limits_{R\rightarrow+\infty}\left[X_{p,\alpha,\lambda}(R)+Y_{q,\beta,\mu}(R)+Z_{r,\gamma,\nu}(R)\right]<+\infty.
\end{align*}
Then $u=v=0$ and $\theta=0$.
\end{Thm}

\begin{Rem}
The first baroclinic mode of vector velocity $v$ and the temperature $\theta$ possess symmetric positions in {\rm Theorem \ref{main1}} while this symmetry is broken in {\rm Theorem \ref{main2}}. For the reason, see {\rm Remark \ref{Rem4.1}}.
\end{Rem}

As a consequence of Theorem \ref{main1} and Remark \ref{Rem3.1} in Section \ref{sec3}, we obtain the following Liouville type theorems in Lebesgue spaces.
\begin{Cor}\label{Cor1.1}
Let $(u,\pi,v,\theta)$ be a smooth solution of \eqref{equ1.1} and $(p,q,r)\in\left(\frac{3}{2},\frac{9}{2}\right]\times[1,6]\times[1,6]$. Suppose $u\in L^p(\mathbb{R}^3)$, $v\in L^q(\mathbb{R}^3)$, $\theta\in L^r(\mathbb{R}^3)$, where $p,q,r$ satisfy
\begin{equation}\label{ine1.7}
\frac{1}{p}+\frac{2}{q}\geq\frac{2}{3},\;\frac{1}{p}+\frac{2}{r}\geq\frac{2}{3},\;\frac{1}{q}+\frac{1}{r}\geq\frac{2}{3}.
\end{equation}
Then $u=v=0$ and $\theta=0$.
\end{Cor}

\begin{Rem}
It is not difficult to verify that if $(p,q,r)$ satisfies one of the following cases
\begin{align*}
&\mathrm{(i)}\;(p,q,r)\in \left(\frac{3}{2},3\right]\times[1,q_0]\times[1,r_0], \text{ where }q_0,r_0\in[2,6],\;\frac{1}{q_0}+\frac{1}{r_0}=\frac{2}{3};\\
&\mathrm{(ii)}\;(p,q,r)\in\left[3,\frac{9}{2}\right]\times[1,2p']\times[1,2p'],
\end{align*}
\eqref{ine1.7} fulfils automatically.
In {\rm\cite[Theorem 1.1]{HD24}}, Ding and Wu showed the Liouville type theorem for \eqref{equ1.1} under the condition $u,v,\theta\in L^p(\mathbb{R}^3)$ with $p\in\left(\frac{3}{2},3\right)$, which is a special example of case $\mathrm{(i)}$.
\end{Rem}

As for Liouville type theorems under the decay conditions, we only take some endpoint cases for instance.
\begin{Cor}\label{Cor1.2}
Let $(u,\pi,v,\theta)$ be a smooth solution of \eqref{equ1.1}. Then Liouville type theorem for \eqref{equ1.1} holds under one of the following decay conditions
\begin{align*}
\mathrm{(C1)}&\;u(x)=o(|x|^{-\frac{2}{3}}),\;v(x)=O(|x|^{-\frac{2}{3}}),\; \theta(x)=o(|x|^{-\frac{4}{3}})\text{ as }|x|\rightarrow+\infty;\\
\mathrm{(C2)}&\;u(x)=o(|x|^{-\frac{2}{3}}),\;v(x)=o(|x|^{-\frac{2}{3}}),\; \theta(x)=O(|x|^{-\frac{4}{3}})\text{ as }|x|\rightarrow+\infty;\\
\mathrm{(C3)}&\;u(x)=o(|x|^{-\frac{2}{3}}),\;v(x)=O(|x|^{-\frac{4}{3}}),\; \theta(x)=o(|x|^{-\frac{2}{3}})\text{ as }|x|\rightarrow+\infty;\\
\mathrm{(C4)}&\;u(x)=o(|x|^{-\frac{2}{3}}),\;v(x)=o(|x|^{-\frac{4}{3}}),\; \theta(x)=O(|x|^{-\frac{2}{3}})\text{ as }|x|\rightarrow+\infty.
\end{align*}
\end{Cor}

\subsection{Liouville type theorems under some assumptions imposed on $u,v,\nabla\theta$}
Before showing our second class of Liouville type results, we introduce some basic assumptions on the parameters $p,q,r$ and $\alpha$, $\beta$, $\gamma$.
\begin{Assum}\label{a1.6}
Let $(p,q,r)\in\left(\frac{3}{2},\frac{9}{2}\right]\times[1,6]\times[1,2]$ and $\alpha\in\left[0,\alpha_p\right]$, $\beta\in\left[0,\frac{3}{q}-\frac{1}{2}\right]$, $\gamma\in\left[0,\frac{3}{r}-\frac{3}{2}\right]$, where $\alpha_p$ is defined in {\rm Assumption \ref{a1.1}}.
\end{Assum}

\begin{Assum}\label{a1.7}
\begin{itemize}
\item[(i)] This item is the same as {\rm Assumption \ref{a1.2}(i)}.
\item[(ii)] When $(p,r)\in\left(\frac{3}{2},\frac{9}{2}\right]\times[1,2]$, it holds that
$$
\alpha+\frac{(4p-6)\min\{r,\frac{6p}{5p-3}\}}{\left(6-3\min\{r,\frac{6p}{5p-3}\}\right)p}\gamma\leq\frac{3}{p}+\frac{6}{\max\{r,\frac{6p}{5p-3}\}}-4.
$$
In addition, when $(p,r)\in\left(\frac{3}{2},3\right)\times\left[\frac{6p}{5p-3},2\right]$, equality can not be achieved in the above inequality.
\item[(iii)] When $(q,r)\in [1,6]\times[1,2]$, it holds that
$$
\frac{2\min\{q,2\}}{6-\min\{q,2\}}\beta+\frac{2\min\{r,\frac{6}{5}\}}{6-3\min\{r,\frac{6}{5}\}}\gamma\leq\frac{3}{\max\{q,2\}}+\frac{3}{\max\{r,\frac{6}{5}\}}-3.
$$
In addition, when $(q,r)\in[2,6]\times\left[\frac{6}{5},2\right]$, equality can not be achieved in the above inequality.
\end{itemize}		
\end{Assum}

\begin{Assum}\label{a1.8}
\begin{itemize}
		\item[(i)] This item is the same as {\rm Assumption \ref{a1.4}(i)}.
        \item[(ii)] When $r\in[1,\frac{6p}{5p-3})$, it holds tha
$$\lambda+\frac{(4p-6)r}{(6-3r)p}\nu\leq\frac{6p-(5p-3)r}{(6-3r)p}.$$
        \item[(iii)] When $(q,r)\in [1,6]\times[1,\frac{6}{5})$, or $[1,2)\times[\frac{6}{5},2]$,  it holds that
$$
\frac{2\min\{q,2\}}{6-\min\{q,2\}}\mu+\frac{2\min\{r,\frac{6}{5}\}}{6-3\min\{r,\frac{6}{5}\}}\nu\leq\frac{6-3\min\{q,2\}}{12-2\min\{q,2\}}+\frac{6-5\min\{r,\frac{6}{5}\}}{12-6\min\{r,\frac{6}{5}\}}.
$$
\end{itemize}		
\end{Assum}

\begin{Assum}\label{a1.9}
\begin{itemize}
	    \item[(i)]{\rm(ii)\;(iii)} These three items are the same as {\rm (i)\;(ii)\;(iii)} in {\rm  Assumption \ref{a1.5}}, respectively.
        \item[(iv)] When $1\leq q<2$, we require $(r,\beta,\gamma)\neq(\frac{6}{5},\frac{3}{q}-\frac{1}{2},0)$.
        \item[(v)] When $1\leq r<\frac{6}{5}$, we require $(q,\beta,\gamma)\neq(2,0,\frac{3}{r}-\frac{3}{2})$.
\end{itemize}
\end{Assum}

In contrast to Theorem \ref{main1} and Theorem \ref{main2}, we impose some growth conditions on Lebesgue norms of $\nabla\theta$ over the annulus.
\begin{Thm}\label{main3}
Let $(u,\pi,v,\theta)$ be a smooth solution of \eqref{equ1.1}.
Suppose the parameters satisfy {\rm Assumptions \ref{a1.6}} and {\rm\ref{a1.7}}. Furthermore, assume that one of the following assumptions holds
\begin{align*}
&\mathrm{(A1)'}\;p\in\left[3,\frac{9}{2}\right],\;\liminf\limits_{R\rightarrow+\infty}X_{p,\alpha}(R)=0,\;\limsup\limits_{R\rightarrow+\infty}\left[Y_{q,\beta}(R)+Z'_{r,\gamma}(R)\right]<+\infty;\\
&\mathrm{(A2)'}\;p\in\left(\frac{3}{2},3\right),\;\liminf\limits_{R\rightarrow+\infty}\left[X_{p,\alpha}(R)+Y_{q,\beta}(R)+Z'_{r,\gamma}(R)\right]<+\infty.
\end{align*}
Then $u=v=0$ and $\theta$ is a constant.
\end{Thm}

\begin{Thm}\label{main4}
Let $(u,\pi,v,\theta)$ be a smooth solution of \eqref{equ1.1}. Suppose the parameters satisfy {\rm Assumptions \ref{a1.6}, \ref{a1.7}, \ref{a1.3}, \ref{a1.8}} and {\rm\ref{a1.9}}.
Furthermore, assume that one of the following assumptions holds
\begin{align*}
&\mathrm{(B1)'}\;p\in\left[3,\frac{9}{2}\right],\;\lim\limits_{R\rightarrow+\infty}X_{p,\alpha}(R)=0,\;\limsup\limits_{R\rightarrow+\infty}\left[Y_{q,\beta,\mu}(R)+Z'_{r,\gamma,\nu}(R)\right]<+\infty;\\
&\mathrm{(B2)'}\;p\in\left(\frac{3}{2},3\right),\;\limsup\limits_{R\rightarrow+\infty}\left[X_{p,\alpha,\lambda}(R)+Y_{q,\beta,\mu}(R)+Z'_{r,\gamma,\nu}(R)\right]<+\infty.
\end{align*}
Then $u=v=0$ and $\theta$ is a constant.
\end{Thm}

Similarly to Corollary \ref{Cor1.1}, we have

\begin{Cor}
Let $(u,\pi,v,\theta)$ be a smooth solution of \eqref{equ1.1} and $(p,q,r)\in\left(\frac{3}{2},\frac{9}{2}\right]\times[1,6]\times[1,2]$. Suppose $u\in L^p(\mathbb{R}^3)$, $v\in L^q(\mathbb{R}^3)$, $\nabla\theta\in L^r(\mathbb{R}^3)$, where $p,q,r$ satisfy
\begin{equation}\label{ine1.8}
\frac{1}{p}+\frac{2}{q}\geq\frac{2}{3},\;\frac{1}{p}+\frac{2}{r}\geq\frac{4}{3},\;\frac{1}{q}+\frac{1}{r}\geq1.
\end{equation}
Then $u=v=0$ and $\theta$ is a constant.
\end{Cor}

\begin{Rem}
It is not difficult to verify that if $(p,q,r)$ satisfies one of the following cases
\begin{align*}
&\mathrm{(i)}\;(p,q,r)\in \left(\frac{3}{2},3\right]\times[1,q_0]\times[1,r_0], \text{ where }q_0\in[2,6],\;r_0\in\left[\frac{6}{5},2\right],\;\frac{1}{q_0}+\frac{1}{r_0}=1;\\
&\mathrm{(ii)}\;(p,q,r)\in\left[3,\frac{9}{2}\right]\times[1,2p']\times\left[1,\frac{6p}{5p-3}\right],
\end{align*}
\eqref{ine1.8} fulfils automatically.
Cho et al. {\rm\cite{CIY23}} established Liouville type result for \eqref{equ1.1} under the assumption $u\in L^3(\mathbb{R}^3)$ and $v,\nabla\theta\in L^2(\mathbb{R}^3)$, which is a special example of case $\mathrm{(i)}$.
\end{Rem}

Similarly to Corollary \ref{Cor1.2}, we have
\begin{Cor}
Let $(u,\pi,v,\theta)$ be a smooth solution of \eqref{equ1.1}. Then Liouville type theorem for \eqref{equ1.1} holds under one of the following decay conditions
\begin{align*}
\mathrm{(C1)'}&\;u(x)=o(|x|^{-\frac{2}{3}}),\;v(x)=O(|x|^{-\frac{2}{3}}),\; \nabla\theta(x)=o(|x|^{-\frac{7}{3}})\text{ as }|x|\rightarrow+\infty;\\
\mathrm{(C2)'}&\;u(x)=o(|x|^{-\frac{2}{3}}),\;v(x)=o(|x|^{-\frac{2}{3}}),\; \nabla\theta(x)=O(|x|^{-\frac{7}{3}})\text{ as }|x|\rightarrow+\infty;\\
\mathrm{(C3)'}&\;u(x)=o(|x|^{-\frac{2}{3}}),\;v(x)=O(|x|^{-\frac{4}{3}}),\; \nabla\theta(x)=o(|x|^{-\frac{5}{3}})\text{ as }|x|\rightarrow+\infty;\\
\mathrm{(C4)'}&\;u(x)=o(|x|^{-\frac{2}{3}}),\;v(x)=o(|x|^{-\frac{4}{3}}),\; \nabla\theta(x)=O(|x|^{-\frac{5}{3}})\text{ as }|x|\rightarrow+\infty.
\end{align*}
\end{Cor}

Finally, we give an arrangement of the rest of this paper. In Section \ref{sec2}, we recall the property of the Bogovskii operator, the Poincar\'{e}-Sobolev inequality, the Poincar\'{e} inequality and a useful iteration lemma. Section \ref{sec3} is devoted to proving Theorem \ref{main1}, Corollary \ref{Cor1.1} and Corollary \ref{Cor1.2} while Section \ref{sec4} is devoted to proving Theorem \ref{main2}. The proofs of Theorem \ref{main3} and Theorem \ref{main4} are presented in Section \ref{sec5}.
Throughout this article, we use $C$ to denote a finite inessential constant which may be different from line to line.
\v0.1in
\section{Preliminaries}\label{sec2}

The first lemma is the existence and boundedness of the Bogovskii map, which is used to deal with the pressure term $\nabla \pi$.
\begin{Lem}\label{Lem2.1}$($See {\rm \cite[Lemma 1]{Tsai21}}$)$
Let $E$ be a bounded Lipschitz domain in $\mathbb{R}^{3}$. Denote $L_{0}^{\sigma}(E):=\{f\in L^{\sigma}(E):\int_E fdx=0\}$ with $1<\sigma<\infty$. There exists a linear operator
\begin{equation*}
\mathrm{Bog}:L_{0}^{\sigma}(E)\rightarrow W_{0}^{1,\sigma}(E),
\end{equation*}
such that for any  $f\in L_{0}^{\sigma}(E),w=\mathrm{Bog}f$ is a vector field satisfying
\begin{equation*}
w\in W_{0}^{1,\sigma}(E), \hspace{0.3cm}\mathrm{div}w=f,\hspace{0.3cm}\|\nabla w\|_{L^{\sigma}(E)}\leq C_{\mathrm{Bog}}(E,\sigma)\|f\|_{L^{\sigma}(E)},
\end{equation*}
where the constant $C_{\mathrm{Bog}}(E,\sigma)$  is independent of $f$. By using a rescaling argument, we see
$$C_{\mathrm{Bog}}(RE,\sigma)=C_{\mathrm{Bog}}(E,\sigma),\text{ where $RE=\{Rx:x\in E\}$ and $R>0$.}$$
\end{Lem}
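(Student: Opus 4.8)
The plan is to reproduce Bogovskii's classical construction: first solve the divergence equation on the model case of a domain star-shaped with respect to a ball, then patch finitely many such pieces to cover a general Lipschitz domain, and finally record the rescaling identity as a separate elementary computation.

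First I would treat a domain $E$ that is star-shaped with respect to an open ball $B\Subset E$. Fixing a cutoff $\omega\in C_c^{\infty}(B)$ with $\int_B\omega\,dx=1$, I would define the candidate solution by Bogovskii's explicit singular integral
$$
w(x)=\int_E f(y)\,\frac{x-y}{|x-y|^{3}}\left(\int_{|x-y|}^{\infty}\omega\!\left(y+\xi\,\frac{x-y}{|x-y|}\right)\xi^{2}\,d\xi\right)dy.
$$
Differentiating under the integral sign and integrating by parts shows that $\nabla\cdot w=f-\left(\int_E f\,dy\right)\omega$, which collapses to $\nabla\cdot w=f$ precisely because $f\in L_0^q(E)$ has zero mean; one also checks that $w$ vanishes on $\partial E$ for smooth $f$, so that $w\in W_0^{1,q}(E)$ follows after a density argument.

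The substance of the estimate is the bound $\|\nabla w\|_{L^q(E)}\le C\|f\|_{L^q(E)}$ for $1<q<\infty$. Differentiating the kernel, I would decompose $\nabla w$ into a principal Calderón--Zygmund singular integral operator, whose kernel has the correct homogeneity $|x-y|^{-3}$ together with the required cancellation on spheres, plus a remainder operator with a weakly singular, integrable kernel. The strong $(q,q)$ bound for the principal part then follows from Calderón--Zygmund theory on the Lebesgue scale, while the remainder is controlled by Young's inequality or Schur's test; the resulting constant depends only on $E$ and $q$. This boundedness of a non-convolution singular integral of Calderón--Zygmund type is the technical heart of the argument and the step I expect to be the main obstacle.

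To pass to a general bounded Lipschitz domain $E$, I would invoke the standard fact that such a domain can be written as a finite union of subdomains $E_1,\dots,E_N$, each star-shaped with respect to a ball, and then build a decomposition $f=\sum_{k=1}^N f_k$ into zero-mean functions $f_k\in L_0^q(E_k)$ (obtained by successively transferring the accumulated local means across overlapping pieces). Solving $\nabla\cdot w_k=f_k$ on each $E_k$ by the star-shaped case and setting $w=\sum_k w_k$ produces the operator $\mathrm{Bog}$ on $E$, whose norm $C_{\mathrm{Bog}}(E,q)$ is controlled by the finitely many local constants and the geometry of the covering. Finally, the scaling identity is a one-line check: given $f\in L_0^q(E)$, set $g(x)=f(x/R)\in L_0^q(RE)$ and $\tilde w(x)=R\,w(x/R)$; then $\nabla\cdot\tilde w=g$ and $\nabla\tilde w(x)=(\nabla w)(x/R)$, so that $\|\nabla\tilde w\|_{L^q(RE)}=R^{3/q}\|\nabla w\|_{L^q(E)}$ and $\|g\|_{L^q(RE)}=R^{3/q}\|f\|_{L^q(E)}$, whence the factors $R^{3/q}$ cancel and $C_{\mathrm{Bog}}(RE,q)=C_{\mathrm{Bog}}(E,q)$.
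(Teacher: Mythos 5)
The paper does not actually prove this lemma: it is quoted verbatim from \cite[Lemma 1]{Tsai21} (ultimately Bogovskii's theorem), and the only assertion the paper makes on its own is the final scaling identity, which it dismisses with the words ``by a rescaling argument.'' Your proposal therefore goes a genuinely different route only in the sense that it supplies the proof the paper outsources: what you reconstruct is, correctly, the classical argument found in the cited literature --- Bogovskii's explicit kernel on a domain star-shaped with respect to a ball, the Calder\'on--Zygmund estimate for the differentiated kernel yielding the strong $(q,q)$ bound for $1<q<\infty$, and the decomposition of a bounded Lipschitz domain into finitely many star-shaped pieces with a zero-mean splitting $f=\sum_k f_k$. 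Your identification of the CZ boundedness of the non-convolution singular integral as the technical heart is accurate, and your final paragraph (setting $g(x)=f(x/R)$, $\tilde w(x)=R\,w(x/R)$, so that $\nabla\tilde w(x)=(\nabla w)(x/R)$ and both sides of the estimate acquire the same factor $R^{3/q}$) is exactly the rescaling computation the paper leaves implicit, and it is correct. Two points of care if you wrote this out in full: (i) in the star-shaped case the support of $w$ is the union of the segments joining $\supp f$ to the ball, so it is compactly contained in $E$ only when $\supp f$ is; the density argument should thus be run with $f\in C_c^{\infty}(E)\cap L_0^q(E)$, which is dense in $L_0^q(E)$; (ii) the constant produced by the patching step depends on the chosen star-shaped covering and on the norms of the splitting $f\mapsto(f_k)$, hence on the geometry of $E$ --- consistent with, and explaining, the notation $C_{\mathrm{Bog}}(E,q)$ in the statement, and it is precisely this dependence that Lemma \ref{Lem2.2} later sharpens for annuli. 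What the paper's citation buys is brevity; what your reconstruction buys is a self-contained proof, at the cost of importing the full CZ machinery.
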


The second lemma is the so-called Poincar\'{e}-Sobolev inequality. Let $(f)_{\Omega}$ represent the mean value of the function $f$ over $\Omega$.
\begin{Lem}\label{Lem2.2}$($See {\rm\cite[Theorem 3.15]{Giusti}} and {\rm \cite[Remark 2]{CIY23}}$)$
Let $\Omega\subset\mathbb{R}^n$ be a bounded Lipschitz domain. Assume $1\leq p<n$. Then there exists a positive constant $C(n,p,\Omega)$ such that
$$\|f-(f)_{\Omega}\|_{L^{\frac{np}{n-p}}(\Omega)}\leq C(n,p,\Omega)\|\nabla f\|_{L^{p}(\Omega)}\text{ for every $f\in W^{1,p}(\Omega)$.}$$
By using a rescaling argument, we see that the constant $C(n,p,R\Omega)$ does not depend on $R$, i.e.,  $C(n,p,R\Omega)=C(n,p,\Omega)$.
\end{Lem}

The third lemma is the so-called Poincar\'{e} inequality.
\begin{Lem}\label{Lem2.2a}$($See {\rm\cite[p.292, Theorem 1]{Evans}}$)$
Let $\Omega\subset\mathbb{R}^n$ be a bounded $C^1$ domain. Assume $1\leq p\leq +\infty$. Then there exists a positive constant $C(n,p,\Omega)$ such that
$$\|f-(f)_{\Omega}\|_{L^p(\Omega)}\leq C(n,p,\Omega)\|\nabla f\|_{L^{p}(\Omega)}\text{ for every $f\in W^{1,p}(\Omega)$.}$$
By using a rescaling argument, we see that $C(n,p,R\Omega)=RC(n,p,\Omega)$.
\end{Lem}

Next, we show the following standard iteration lemma, which is a generalization of \cite[Lemma 3.1]{Giaquinta}, and can be found in \cite[Lemma 2.1]{CL24}.
\begin{Lem}\label{Lem2.3}
Let $f(t)$ be a non-negative bounded function on $\left[r_0, r_1\right] \subset \mathbb{R}^{+}$. If there are non-negative constants $a_i, b_i,\alpha_i$, $i=1,2,\cdots,m$, and a parameter $\theta_0 \in[0,1)$ such that for any $r_0 \leq s<t \leq r_1$, it holds that
$$
f(s) \leq \theta_0 f(t)+\sum_{i=1}^m\left(\frac{a_i}{(t-s)^{\alpha_i}}+b_i\right),
$$
then
$$
f(s) \leq C\sum_{i=1}^m\left(\frac{a_i}{(t-s)^{\alpha_i}}+b_i\right),
$$
where $C$ is a constant depending on $\alpha_1,\alpha_2,\cdots,\alpha_m$ and $\theta_0$.
\end{Lem}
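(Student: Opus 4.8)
The plan is to run the classical ``hole-filling'' iteration of Giaquinta, adapted to accommodate the finite sum of source terms on the right-hand side. The idea is to interpolate between the inner radius $s$ and the outer radius $t$ by a geometrically spaced sequence of radii, apply the hypothesis across each consecutive pair, and then sum the resulting series, choosing the geometric ratio so that all the arising series converge at once.

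First I would fix $r_0 \le s < t \le r_1$ and a ratio $\tau \in (0,1)$ to be specified later. Set $t_0 = s$ and $t_{j+1} = t_j + (1-\tau)\tau^j(t-s)$ for $j \ge 0$, so that $t_{j+1}-t_j = (1-\tau)\tau^j(t-s)$ and, since $\sum_{j\ge 0}(1-\tau)\tau^j = 1$, the sequence $t_j$ increases to $t$ while staying in $[s,t]\subset[r_0,r_1]$. Applying the hypothesis with the pair $(t_j,t_{j+1})$ in place of $(s,t)$ gives
\[
f(t_j) \le \theta_0 f(t_{j+1}) + \sum_{i=1}^m\left(\frac{A_i}{[(1-\tau)\tau^j(t-s)]^{\alpha_i}} + B_i\right).
\]
Iterating this inequality from $j=0$ up to $j=k-1$, the prefactors $\theta_0^j$ accumulate and I obtain
\[
f(s) \le \theta_0^k f(t_k) + \sum_{i=1}^m\left(\frac{A_i}{[(1-\tau)(t-s)]^{\alpha_i}}\sum_{j=0}^{k-1}(\theta_0\tau^{-\alpha_i})^j + B_i\sum_{j=0}^{k-1}\theta_0^j\right).
\]

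The crux of the argument, and the only point where the multi-term version differs from the single-term lemma, is the choice of $\tau$: I need $\theta_0\tau^{-\alpha_i} < 1$ \emph{simultaneously} for every $i=1,\dots,m$ so that each geometric series converges. Because there are only finitely many exponents and $\theta_0\tau^{-\alpha_i} \to \theta_0 < 1$ as $\tau \to 1^-$, one can fix a single $\tau \in (0,1)$ close enough to $1$ for which all $m$ inequalities hold at once; this is the main (though mild) obstacle. With such a $\tau$, letting $k\to\infty$ and using that $f$ is bounded together with $\theta_0^k \to 0$ kills the term $\theta_0^k f(t_k)$, while the two geometric series sum to $\tfrac{1}{1-\theta_0\tau^{-\alpha_i}}$ and $\tfrac{1}{1-\theta_0}$ respectively.

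Collecting the constants, I would set
\[
C = \max_{1\le i\le m}\left\{\frac{1}{(1-\tau)^{\alpha_i}(1-\theta_0\tau^{-\alpha_i})},\ \frac{1}{1-\theta_0}\right\},
\]
which depends only on $\alpha_1,\dots,\alpha_m$ and $\theta_0$, and conclude $f(s) \le C\sum_{i=1}^m\big(\frac{A_i}{(t-s)^{\alpha_i}} + B_i\big)$. The case where some $\alpha_i=0$ is harmless, since then the corresponding term is the constant $A_i$ and can be folded into the $B_i$ bookkeeping, the factor $\theta_0\tau^{-\alpha_i}=\theta_0<1$ already meeting the convergence requirement.
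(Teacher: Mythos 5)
Your proof is correct. The paper itself does not prove this lemma --- it defers to \cite[Lemma 3.1]{Giaquinta} and to \cite{CL24}, \cite{YX20} --- and your argument is precisely the standard geometric-iteration proof behind those references: the decomposition $t_{j+1}-t_j=(1-\tau)\tau^j(t-s)$, the accumulation of $\theta_0^j$ prefactors, and the use of boundedness of $f$ to discard $\theta_0^k f(t_k)$, with the only genuinely multi-term point (choosing one ratio $\tau\in(0,1)$ with $\theta_0\tau^{-\alpha_i}<1$ for all $i$ simultaneously, which is possible since there are finitely many exponents) identified and handled correctly.
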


\section{Proofs of Theorem \ref{main1}, Corollary \ref{Cor1.1} and Corollary \ref{Cor1.2}}\label{sec3}
For any $\rho>0$, we denote
\begin{equation}\label{ine3.1}
f(\rho)=\|u\|_{L^6 (B_\rho)}^2+\|v\|_{L^6 (B_\rho)}^2+\|\theta\|_{L^6 (B_\rho)}^2+\|\nabla u\|_{L^2 (B_\rho)}^2+\|\nabla v\|_{L^2 (B_\rho)}^2+\|\nabla \theta\|_{L^2 (B_\rho)}^2.
\end{equation}
For any $\sqrt{3}R\leq s<t\leq 2R$, we introduce several integrals involving $u$, $v$ and $\theta$. Denote
\begin{align}\label{ine3.2}
&J_1=\frac{1}{(t-s)^2}\int_{B_t \backslash B_s}(|u|^2+|v|^2+|\theta|^2) d x,\;J_2=\frac{1}{t-s} \int_{B_t \backslash B_\frac{3R}{2}}|u|^3  d x,\notag\\
&J_3=\frac{1}{t-s}\|v\|_{L^{2p'}(B_t\backslash B_\frac{3R}{2})}^{2}\|u\|_{L^p(B_t\backslash B_s)},\;J_4=\frac{1}{t-s}\|\theta\|_{L^{2p'}(B_t\backslash B_\frac{3R}{2})}^{2}\|u\|_{L^p(B_t\backslash B_s)},\\
&J_5=\frac{1}{t-s}\int_{B_t \backslash B_s}|v||\theta|dx.\notag
\end{align}

In order to prove Theorem \ref{main1}, we first establish an important energy estimate.
\begin{Lem}\label{Lem3.1}
Let $(u,\pi,v,\theta)$ be a smooth solution of \eqref{equ1.1} and $\sqrt{3}R\leq s<t\leq 2R$. Then we have
\begin{align}\label{ine3.3}
f(s)\leq\frac{1}{2}\int_{B_t\backslash B_\frac{3R}{2}}|\nabla u|^2dx+C(J_1+J_2+J_3+J_4+J_5),
\end{align}
where $f$ is defined by \eqref{ine3.1} and $J_1,J_2,J_3,J_4,J_5$ are defined by \eqref{ine3.2}.
\end{Lem}
\begin{proof}
Since $\sqrt{3}R\leq s<t\leq 2R$, we infer $s\geq\frac{\sqrt{3}t}{2}>\frac{3R}{2}$. We introduce a cut-off function $\eta \in C_0^{\infty}\left(\mathbb{R}^{3}\right)$ satisfying
\begin{equation*}
\eta(x)= \begin{cases}1, & |x| <s, \\ 0, & |x| >\frac{s+t}{2},\end{cases}
\end{equation*}
with
$$\text{$0\leq\eta (x)\leq 1$, and $\|\nabla \eta\|_{L^{\infty}} \leq \frac{C}{t-s}$, $\|\nabla^2\eta\|_{L^{\infty}} \leq \frac{C}{(t-s)^2}$.}$$
By Lemma \ref{Lem2.1}, there exists $w\in W_{0}^{1,\sigma}(B_t\backslash \overline{B_\frac{\sqrt{3}t}{2}})$ such that $w$ satisfies
$$
\mathrm{div }w=u\cdot\nabla\eta^{2} \text{ in }B_t\backslash \overline{B_\frac{\sqrt{3}t}{2}},
$$
with the estimate
\begin{equation}\label{ine3.4}
\aligned
\|\nabla w\|_{L^\sigma\left(B_t\backslash \overline{B_\frac{\sqrt{3}t}{2}}\right)}\leq C\|u\cdot\nabla\eta^2\|_{L^\sigma\left(B_t\backslash \overline{B_\frac{\sqrt{3}t}{2}}\right)}\leq\frac{C}{t-s}\|u\|_{L^\sigma(B_t\backslash B_s)},
\endaligned
\end{equation}
for any $1<\sigma<+\infty$. We extend $w$ by zero to $B_\frac{\sqrt{3}t}{2}$, then $w\in W_{0}^{1,\sigma}(B_t).$

Multiply both sides of $\eqref{equ1.1}_{1}$, $\eqref{equ1.1}_{2}$ and $\eqref{equ1.1}_{3}$ by $u \eta^2-w$, $v \eta^2$ and $\theta \eta^2$ respectively, integrate over $B_t$ and apply integration by parts. This procedure yields
\begin{align}
&\int_{B_t}\left(|\nabla u|^{2}+|\nabla v|^{2}+|\nabla \theta|^{2}\right)\eta^2 d x\notag\\
=&\frac{1}{2}\int_{B_t}(|u|^2+|v|^2+|\theta|^2) \Delta \eta^2 d x+\frac{1}{2} \int_{B_t}(|u|^2+|v|^2+|\theta|^2) u \cdot \nabla \eta^2 d x\notag\\
&+\int_{B_t}\nabla u:\nabla w dx+ \int_{B_t}(v\otimes v):(u\otimes\nabla \eta^2) dx+ \int_{B_t}\theta v\cdot\nabla \eta^2 dx\label{ine3.5}\\
& - \int_{B_t}(u\otimes u+v\otimes v): \nabla w  dx.\notag
\end{align}
Applying the Gagliardo-Nirenberg inequality, we have
\begin{align*}
&\|u \eta\|_{L^6 (B_t)}^2+\|v \eta\|_{L^6 (B_t)}^2+\|\theta \eta\|_{L^6 (B_t)}^2\\
\leq&C\left(\|\nabla(u \eta)\|_{L^2 (B_t)}^2+\|\nabla(v \eta)\|_{L^2 (B_t)}^2+\|\nabla(\theta \eta)\|_{L^2 (B_t)}^2 \right)\\
\leq&C \left(\|\eta\nabla u\|_{L^2 (B_t)}^2+\|\eta\nabla v\|_{L^2 (B_t)}^2+\|\eta\nabla \theta\|_{L^2 (B_t)}^2\right)\\
&+C\left(\|u\otimes\nabla \eta\|_{L^2 (B_t)}^2+\|v\otimes\nabla \eta\|_{L^2 (B_t)}^2+\|\theta\otimes\nabla \eta\|_{L^2 (B_t)}^2 \right).
\end{align*}
Combining the above inequality and \eqref{ine3.5}, we have
\begin{align}
&\|u\eta\|_{L^6 (B_t)}^2+\|v \eta\|_{L^6 (B_t)}^2+\|\theta \eta\|_{L^6 (B_t)}^2+\int_{B_t}\left(|\nabla u|^{2}+|\nabla v|^{2}+|\nabla \theta|^{2}\right)\eta^2 d x\notag\\
\leq&\frac{C}{(t-s)^2}\int_{B_t \backslash B_s}(|u|^2+|v|^2+|\theta|^2) d x+ \frac{C}{t-s} \int_{B_t \backslash B_s}|u|^3  d x\notag\\
&+C\int_{B_t\backslash B_\frac{\sqrt{3}t}{2}}|\nabla u||\nabla w| dx+C\int_{B_t\backslash B_\frac{\sqrt{3}t}{2}}|u|^{2} |\nabla w|dx+C\int_{B_t\backslash B_\frac{\sqrt{3}t}{2}}|v|^{2}|\nabla w|dx \label{ine3.6}\\
&+\frac{C}{t-s}\int_{B_t \backslash B_s}|v|^2|u|dx+\frac{C}{t-s}\int_{B_t \backslash B_s}|\theta|^2|u|dx+\frac{C}{t-s}\int_{B_t \backslash B_s}|v||\theta|dx\notag\\
=&:I_1+I_2+I_3+I_4+I_5+I_6+I_7+I_8.\notag
\end{align}

Obviously, we have
\begin{align}\label{ine3.7}
I_1=CJ_1,\;I_8=CJ_5.
\end{align}
By the H\"{o}lder inequality and \eqref{ine3.4}, we obtain
\begin{align}\label{ine3.8}
I_2+I_4\leq&\frac{C}{t-s}\|u\|_{L^{3}(B_t\backslash B_s)}^3+C\|u\|_{L^{3}(B_t\backslash B_\frac{\sqrt{3}t}{2})}^{2}\|\nabla w\|_{L^{3}(B_t\backslash B_\frac{\sqrt{3}t}{2})}\notag\\
\leq&\frac{C}{t-s}\|u\|_{L^{3}(B_t\backslash B_s)}^3+\frac{C}{t-s}\|u\|_{L^{3}(B_t\backslash B_\frac{\sqrt{3}t}{2})}^{2}\|u\|_{L^{3}(B_t\backslash B_s)}\\
\leq&\frac{C}{t-s}\|u\|_{L^{3}(B_t\backslash B_\frac{3R}{2})}^3=CJ_2.\notag
\end{align}
By the Young inequality and \eqref{ine3.4}, we have
\begin{align}\label{ine3.9}
I_3&\leq\frac{1}{2}\int_{B_t\backslash B_\frac{\sqrt{3}t}{2}}|\nabla u|^2dx+C\int_{B_t\backslash B_\frac{\sqrt{3}t}{2}}|\nabla w|^2dx\notag\\
&\leq\frac{1}{2}\int_{B_t\backslash B_\frac{\sqrt{3}t}{2}}|\nabla u|^2dx+\frac{C}{(t-s)^2}\int_{B_t \backslash B_s}|u|^2dx\\
&\leq\frac{1}{2}\int_{B_t\backslash B_\frac{3R}{2}}|\nabla u|^2dx+CJ_1.\notag
\end{align}
Using the H\"{o}lder inequality and \eqref{ine3.4} again, we obtain
\begin{align}\label{ine3.10}
I_5+I_6\leq&C\|v\|_{L^{2p'}(B_t\backslash B_\frac{\sqrt{3}t}{2})}^{2}\|\nabla w\|_{L^p(B_t\backslash B_\frac{\sqrt{3}t}{2})}+\frac{C}{t-s}\|v\|_{L^{2p'}(B_t\backslash B_s)}^{2}\|u\|_{L^p(B_t\backslash B_s)}\notag\\
\leq&\frac{C}{t-s}\|v\|_{L^{2p'}(B_t\backslash B_\frac{\sqrt{3}t}{2})}^{2}\|u\|_{L^p(B_t\backslash B_s)}+\frac{C}{t-s}\|v\|_{L^{2p'}(B_t\backslash B_s)}^{2}\|u\|_{L^p(B_t\backslash B_s)}\\
\leq&\frac{C}{t-s}\|v\|_{L^{2p'}(B_t\backslash B_\frac{3R}{2})}^{2}\|u\|_{L^p(B_t\backslash B_s)}=CJ_3.\notag
\end{align}
Using the H\"{o}lder inequality, we obtain
\begin{align}\label{ine3.11}
I_7\leq&\frac{C}{t-s}\|\theta\|_{L^{2p'}(B_t\backslash B_s)}^{2}\|u\|_{L^p(B_t\backslash B_s)}\leq\frac{C}{t-s}\|\theta\|_{L^{2p'}(B_t\backslash B_\frac{3R}{2})}^{2}\|u\|_{L^p(B_t\backslash B_s)}=CJ_4.
\end{align}
Collecting \eqref{ine3.6}, \eqref{ine3.7}, \eqref{ine3.8}, \eqref{ine3.9}, \eqref{ine3.10} and \eqref{ine3.11}, we conclude that \eqref{ine3.3} holds.
\end{proof}

The estimates on $J_1$, $J_2$, $J_3$, $J_4$, $J_5$ are given in the next five lemmas. From the proof of Theorem \ref{main1}, we can conclude that the restrictions on the parameters $\alpha,\beta,\gamma$ are due to these estimates. The relation between $\alpha$ and $\beta$ is determined by the estimate of $J_3$. The relation between $\alpha$ and $\gamma$ is determined by the estimate of $J_4$. The relation between $\beta$ and $\gamma$ is determined by the estimate of $J_5$.
\begin{Lem}\label{Lem2.4}$($See {\rm\cite[Lemma 2.4]{ZZB25}}$)$
Let $\sqrt{3}R\leq s<t\leq 2R$ and $p,q,r\geq1$.
Suppose that $u,v,\theta$ are smooth vector-valued functions. Then
\begin{itemize}
\item[(i)] For any $\varepsilon>0$, there exist positive constants $C_\varepsilon$ and $C$ such that
\begin{align}
J_1\leq&\varepsilon\|u\|_{L^{6}(B_t\backslash B_s)}^{2}+\frac{C_\varepsilon}{(t-s)^{\frac{6}{p}-1}}\|u\|_{L^{p}(A_{R})}^{2}+\frac{C}{(t-s)^2}R^{3-\frac{6}{p}}\|u\|_{L^p (A_{R})}^2\notag\\
&+\varepsilon\|v\|_{L^{6}(B_t\backslash B_s)}^{2}+\frac{C_\varepsilon}{(t-s)^{\frac{6}{q}-1}}\|v\|_{L^{q}(A_{R})}^{2}+\frac{C}{(t-s)^2}R^{3-\frac{6}{q}}\|v\|_{L^{q}(A_{R})}^{2}\label{ine3.12}\\
&+\varepsilon\|\theta\|_{L^{6}(B_t\backslash B_s)}^{2}+\frac{C_\varepsilon}{(t-s)^{\frac{6}{r}-1}}\|\theta\|_{L^{r}(A_{R})}^{2}+\frac{C}{(t-s)^2}R^{3-\frac{6}{r}}\|\theta\|_{L^{r}(A_{R})}^{2}.\notag
\end{align}
\item[(ii)] It holds that
\begin{equation}\label{ine3.13}
\aligned
J_1\leq\frac{CR^2}{(t-s)^2}\left(\|u\|_{L^6(A_{R})}^2 +\|v\|_{L^6(A_{R})}^2+\|\theta\|_{L^6(A_{R})}^2\right).
\endaligned
\end{equation}
\end{itemize}
\end{Lem}

\begin{Lem}$\label{Lem2.5}($See {\rm\cite[Lemma 2.5]{ZZB25}}$)$
Let $\sqrt{3}R\leq s<t\leq 2R$. Suppose that $u$ is a smooth vector-valued function. Then we have the following conclusions:
\begin{itemize}
\item[(i)]  Assume $p\geq3$. It holds that
\begin{equation}\label{ine3.14}
J_2\leq\frac{C}{t-s}R^{3-\frac{9}{p}}\|u\|_{L^{p}(A_{R})}^{3}.
\end{equation}
\item[(ii)]  Assume $p\in\left[1,3\right)$. It holds that
\begin{equation}\label{ine3.15}
J_2\leq \frac{1}{t-s}\|u\|_{L^p(B_t\backslash B_\frac{3R}{2})}^{\frac{3p}{6-p}}\|u\|_{L^6(B_t\backslash B_\frac{3R}{2})}^{\frac{18-6p}{6-p}}.
\end{equation}
\item[(iii)] Assume $p\in\left(\frac{3}{2},3\right)$. For any $\varepsilon>0$, it holds that
\begin{equation}\label{ine3.16}
\aligned
J_2&\leq\varepsilon\|u\|_{L^6(B_t\backslash B_\frac{3R}{2})}^2+\frac{C_\varepsilon}{(t-s)^\frac{6-p}{2p-3}}\|u\|_{L^p(A_{R})}^\frac{3p}{2p-3}.
\endaligned
\end{equation}
\end{itemize}
\end{Lem}

\begin{Lem}\label{Lem2.7}$($See {\rm\cite[Lemma 2.6]{ZZB25}}$)$
Let $\sqrt{3}R\leq s<t\leq 2R$. Suppose that $u,v$ are smooth vector-valued functions. Then
\begin{itemize}
\item[(i)] Let $\frac{3}{2}<p\leq\frac{9}{2}$, $1\leq q<2p'$. For any $\varepsilon>0$, there exists a positive constant $C_\varepsilon$ such that
\begin{equation}\label{ine3.17}
J_3\leq\varepsilon\|v\|_{L^6(B_t\backslash B_\frac{3R}{2})}^2+\frac{C_\varepsilon}{(t-s)^\frac{(6-q)p'}{(3-p')q}}\|u\|_{L^p(A_{R})}^\frac{(6-q)p'}{(3-p')q}\|v\|_{L^q(A_{R})}^2.
\end{equation}
\item[(ii)] Let $\frac{3}{2}<p\leq\frac{9}{2}$, $1\leq q<2p'$. It holds that
\begin{equation}\label{ine3.18}
\aligned
J_3&\leq\frac{C}{t-s}\|u\|_{L^p(B_t\backslash B_s)}\|v\|_{L^q(B_t\backslash B_\frac{3R}{2})}^\frac{2(3-p')q}{(6-q)p'}\|v\|_{L^6(B_t\backslash B_\frac{3R}{2})}^\frac{12p'-6q}{(6-q)p'}.
\endaligned
\end{equation}
\item[(iii)] Let $\frac{3}{2}<p\leq\frac{9}{2}$, $q\geq 2p'$. It holds that
\begin{equation}\label{ine3.19}
\aligned
J_3&\leq\frac{C}{t-s}R^{3-\frac{3}{p}-\frac{6}{q}}\|v\|_{L^q(A_{R})}^{2}\|u\|_{L^p(A_{R})}.
\endaligned
\end{equation}
\end{itemize}

\end{Lem}

Similarly to Lemma \ref{Lem2.7}, we have the following lemma.
\begin{Lem}
Let $\sqrt{3}R\leq s<t\leq 2R$. Suppose that $u$ is a smooth vector-valued function and $\theta$ is a smooth function. Then
\begin{itemize}
\item[(i)] Let $\frac{3}{2}<p\leq\frac{9}{2}$, $1\leq r<2p'$. For any $\varepsilon>0$, there exists a positive constant $C_\varepsilon$ such that
\begin{equation}\label{ine3.20}
J_4\leq\varepsilon\|\theta\|_{L^6(B_t\backslash B_\frac{3R}{2})}^2+\frac{C_\varepsilon}{(t-s)^\frac{(6-r)p'}{(3-p')r}}\|u\|_{L^p(A_{R})}^\frac{(6-r)p'}{(3-p')r}\|\theta\|_{L^r(A_{R})}^2.
\end{equation}
\item[(ii)] Let $\frac{3}{2}<p\leq\frac{9}{2}$, $1\leq r<2p'$. It holds that
\begin{equation}\label{ine3.21}
\aligned
J_4&\leq\frac{1}{t-s}\|u\|_{L^p(B_t\backslash B_s)}\|\theta\|_{L^r(B_t\backslash B_\frac{3R}{2})}^\frac{2(3-p')r}{(6-r)p'}\|\theta\|_{L^6(B_t\backslash B_\frac{3R}{2})}^\frac{12p'-6r}{(6-r)p'}.
\endaligned
\end{equation}
\item[(iii)] Let $\frac{3}{2}<p\leq\frac{9}{2}$, $r\geq 2p'$. It holds that
\begin{equation}\label{ine3.22}
\aligned
J_4&\leq\frac{C}{t-s}R^{3-\frac{3}{p}-\frac{6}{r}}\|\theta\|_{L^r(A_{R})}^{2}\|u\|_{L^p(A_{R})}.
\endaligned
\end{equation}
\end{itemize}

\end{Lem}

\begin{Lem}\label{Lem2.9}
Let $\sqrt{3}R\leq s<t\leq 2R$. Suppose that $v$ is a smooth vector-valued function and $\theta$ is a smooth function. Then
\begin{itemize}
\item[(i)] Let $1\leq q<2$, $1\leq r<2$. For any $\varepsilon>0$, there exists a positive constant $C_\varepsilon$ such that
\begin{equation}\label{ine3.23}
\aligned
J_5\leq&\varepsilon\|v\|_{L^6(B_t\backslash B_\frac{3R}{2})}^2+\varepsilon\|\theta\|_{L^6(B_t\backslash B_\frac{3R}{2})}^2+\\
&\frac{C_{\varepsilon}}{(t-s)^{\frac{(6-q)(6-r)}{q(6-r)+r(6-q)}}}\|v\|_{L^q(A_{R})}^{\frac{2q(6-r)}{q(6-r)+r(6-q)}}\|\theta\|_{L^r(A_{R})}^{\frac{2r(6-q)}{q(6-r)+r(6-q)}}.
\endaligned
\end{equation}

\item[(ii)] Let $1\leq q<2$, $1\leq r<2$. It holds that
\begin{equation}\label{ine3.24}
J_5\leq\frac{1}{t-s}\|v\|_{L^q(B_t\backslash B_\frac{3R}{2})}^{\frac{2q}{6-q}}\|\theta\|_{{L^r}(B_t\backslash B_\frac{3R}{2})}^{\frac{2r}{6-r}} \|v\|_{L^6(B_t\backslash B_\frac{3R}{2})}^{\frac{3(2-q)}{6-q}}\|\theta\|_{{L^6}(B_t\backslash B_\frac{3R}{2})}^\frac{3(2-r)}{6-r}.
\end{equation}

\item[(iii)] Let $1\leq q<2$, $2\leq r\leq 6$. For any $\varepsilon>0$, there exist a positive constant $C_\varepsilon$ such that
\begin{equation}\label{ine3.25}
\aligned
J_5\leq\varepsilon\|v\|_{L^6(B_t\backslash B_\frac{3R}{2})}^2+\frac{C_\varepsilon}{(t-s)^\frac{2(6-q)}{6+q}}R^{\frac{3(r-2)(6-q)}{r(6+q)}}\|v\|_{L^q(A_{R})}^{\frac{4q}{6+q}}\|\theta\|_{L^r(A_{R})}^\frac{2(6-q)}{6+q}.
\endaligned
\end{equation}

\item[(iv)] Let $1\leq q<2$, $2\leq r\leq6$. It holds that
\begin{equation}\label{ine3.26}
J_5\leq\frac{C}{t-s}R^{3(\frac{1}{2}-\frac{1}{r})}\|v\|_{L^q(B_t\backslash B_\frac{3R}{2})}^{\frac{2q}{6-q}}\|\theta\|_{{L^r}(B_t\backslash B_\frac{3R}{2})} \|v\|_{L^6(B_t\backslash B_\frac{3R}{2})}^{\frac{3(2-q)}{6-q}}.
\end{equation}

\item[(v)] Let $2\leq q\leq6$, $1\leq r<2$. For any $\varepsilon>0$, there exist a positive constant $C_\varepsilon$ such that
\begin{equation}\label{ine3.27}
\aligned
J_5\leq\varepsilon\|\theta\|_{L^6(B_t\backslash B_\frac{3R}{2})}^2+\frac{C_\varepsilon}{(t-s)^\frac{2(6-r)}{6+r}}R^{\frac{3(q-2)(6-r)}{q(6+r)}}\|v\|_{L^q(A_{R})}^{\frac{2(6-r)}{6+r}}\|\theta\|_{L^r(A_{R})}^\frac{4r}{6+r}.
\endaligned
\end{equation}

\item[(vi)] Let $2\leq q\leq6$, $1\leq r<2$. It holds that
\begin{equation}\label{ine3.28}
J_5\leq\frac{C}{t-s}R^{3(\frac{1}{2}-\frac{1}{q})}\|v\|_{L^q(B_t\backslash B_\frac{3R}{2})}\|\theta\|_{{L^r}(B_t\backslash B_\frac{3R}{2})}^{\frac{2r}{6-r}} \|\theta\|_{L^6(B_t\backslash B_\frac{3R}{2})}^{\frac{3(2-r)}{6-r}}.
\end{equation}

\item[(vii)] Let $2\leq q\leq6$, $2\leq r\leq 6$. It holds that
\begin{equation}\label{ine3.29}
\aligned
J_5\leq \frac{C}{t-s}R^{3(1-\frac{1}{q}-\frac{1}{r})}\|v\|_{L^q(A_{R})}\|\theta\|_{L^r(A_{R})}.
\endaligned
\end{equation}
\end{itemize}
\end{Lem}

\begin{proof}
By the H\"{o}lder inequality, we obtain
\begin{equation}\label{ine3.30}
J_5\leq \frac{1}{t-s}\|v\|_{L^2(B_t \backslash B_s)}\|\theta\|_{L^2(B_t \backslash B_s)}.
\end{equation}
When $1\leq q<2, 1\leq r<2$, substituting the following two interpolation inequalities
\begin{equation}\label{ine3.31}
\|v\|_{L^2(B_t \backslash B_s)}\leq\|v\|_{L^q(B_t \backslash B_s)}^{\frac{2q}{6-q}}\|v\|_{L^6(B_t \backslash B_s)}^{\frac{3(2-q)}{6-q}},
\end{equation}
\begin{equation}\label{ine3.32}
\|\theta\|_{L^2(B_t \backslash B_s)}\leq\|\theta\|_{L^r(B_t \backslash B_s)}^{\frac{2r}{6-r}}\|\theta\|_{L^6(B_t \backslash B_s)}^{\frac{3(2-r)}{6-r}}
\end{equation}
into \eqref{ine3.30}, we arrive at the estimate \eqref{ine3.24}.
Applying the Young inequality to \eqref{ine3.24} twice, we have
\begin{align*}
J_5&\leq\varepsilon\|v\|_{L^6(B_t\backslash B_\frac{3R}{2})}^2+C_\varepsilon\left(\frac{1}{t-s}\|v\|_{L^q(B_t\backslash B_\frac{3R}{2})}^{\frac{2q}{6-q}}\|\theta\|_{{L^r}(B_t\backslash B_\frac{3R}{2})}^{\frac{2r}{6-r}}\|\theta\|_{{L^6}(B_t\backslash B_\frac{3R}{2})}^\frac{3(2-r)}{6-r}\right)^\frac{12-2q}{6+q}\\
&\leq \varepsilon\|v\|_{L^6(B_t\backslash B_\frac{3R}{2})}^2+\varepsilon\|\theta\|_{L^6(B_t\backslash B_\frac{3R}{2})}^2+\frac{C_\varepsilon}{(t-s)^{\frac{(6-q)(6-r)}{q(6-r)+r(6-q)}}}\|v\|_{L^q(A_{R})}^{\frac{2q(6-r)}{q(6-r)+r(6-q)}}\|\theta\|_{L^r(A_{R})}^{\frac{2r(6-q)}{q(6-r)+r(6-q)}}.
\end{align*}
When $1\leq q<2$, $2\leq r\leq6$, combining \eqref{ine3.30} and \eqref{ine3.31} and then using the H\"{o}lder inequality, we get
\begin{align*}
J_5&\leq\frac{1}{t-s}\|v\|_{L^q(B_t\backslash B_\frac{3R}{2})}^{\frac{2q}{6-q}}\|\theta\|_{{L^2}(B_t\backslash B_\frac{3R}{2})} \|v\|_{L^6(B_t\backslash B_\frac{3R}{2})}^{\frac{3(2-q)}{6-q}}\\
&\leq\frac{C}{t-s}R^{3(\frac{1}{2}-\frac{1}{r})}\|v\|_{L^q(B_t\backslash B_\frac{3R}{2})}^{\frac{2q}{6-q}}\|\theta\|_{{L^r}(B_t\backslash B_\frac{3R}{2})} \|v\|_{L^6(B_t\backslash B_\frac{3R}{2})}^{\frac{3(2-q)}{6-q}}.
\end{align*}
Applying the Young inequality to \eqref{ine3.26}, we have
\begin{align*}
J_5\leq\varepsilon\|v\|_{L^6(B_t\backslash B_\frac{3R}{2})}^2+\frac{C_\varepsilon}{(t-s)^\frac{2(6-q)}{6+q}}R^{\frac{3(r-2)(6-q)}{r(6+q)}}\|v\|_{L^q(A_{R})}^{\frac{4q}{6+q}}\|\theta\|_{L^r(A_{R})}^\frac{2(6-q)}{6+q}.
\end{align*}
When $2\leq q\leq6, 1\leq r<2$, combining \eqref{ine3.30} and \eqref{ine3.32} and then using the H\"{o}lder inequality, we get \eqref{ine3.28}. Applying the Young inequality to \eqref{ine3.28}, we find that \eqref{ine3.27} holds.

For $2\leq q \leq6, 2\leq r \leq6$, applying the H\"{o}lder inequality to \eqref{ine3.30}, we obtain
\begin{align*}
J_5&\leq\frac{C}{t-s}R^{3(1-\frac{1}{q}-\frac{1}{r})}\|v\|_{L^q(B_t \backslash B_s)}\|\theta\|_{L^r(B_t \backslash B_s)}\\
&\leq\frac{C}{t-s}R^{3(1-\frac{1}{q}-\frac{1}{r})}\|v\|_{L^q(A_{R})}\|\theta\|_{L^r(A_{R})}.
\end{align*}

\end{proof}

With the above preparations, we proceed to prove Theorem \ref{main1}.
\begin{proof}[{\bf Proof of Theorem \ref{main1}}]
{\bf Assume that $\mathrm{(A1)}$ holds.} Then we can choose a sequence $R_j\nearrow+\infty$ such that
\begin{equation*}
\lim\limits_{j\rightarrow+\infty}X_{p,\alpha}(R_j)=0,\;\lim\limits_{j\rightarrow+\infty}Y_{q,\beta}(R_j)<+\infty,\;\lim\limits_{j\rightarrow+\infty}Z_{r,\gamma}(R_j)<+\infty.
\end{equation*}
We only consider the case $p,q,r\in[3,\frac{9}{2}]\times[1,2)\times[1,2)$ as the remaining cases can be treated similarly. Let $f$ be the function defined by \eqref{ine3.1}.

Combining  \eqref{ine3.3}, \eqref{ine3.12}, \eqref{ine3.14},  \eqref{ine3.17}, \eqref{ine3.20} and \eqref{ine3.23}, we derive that
\begin{align*}
f(s)\leq&\frac{1}{2}f(t)+\frac{C}{(t-s)^{\frac{6}{p}-1}}\|u\|_{L^p(A_{R})}^{2}+\frac{C}{(t-s)^2}R^{3-\frac{6}{p}}\|u\|_{L^{p}(A_{R})}^{2}+\frac{C}{(t-s)^{\frac{6}{q}-1}}\|v\|_{L^{q}(A_{R})}^{2}\\
&+\frac{C}{(t-s)^2}R^{3-\frac{6}{q}}\|v\|_{L^q(A_{R})}^{2}+\frac{C}{(t-s)^{\frac{6}{r}-1}}\|\theta\|_{L^{r}(A_{R})}^{2}+\frac{C}{(t-s)^2}R^{3-\frac{6}{r}}\|\theta\|_{L^r(A_{R})}^{2}\\
&+\frac{C}{(t-s)}R^{3-\frac{9}{p}}\|u\|_{L^p(A_{R})}^3+\frac{C}{(t-s)^\frac{(6-q)p'}{(3-p')q}}\|u\|_{L^p(A_{R})}^\frac{(6-q)p'}{(3-p')q}\|v\|_{L^q(A_{R})}^2\\
&+\frac{C}{(t-s)^\frac{(6-r)p'}{(3-p')r}}\|u\|_{L^p(A_{R})}^\frac{(6-r)p'}{(3-p')r}\|\theta\|_{L^r(A_{R})}^2\\
&+\frac{C}{(t-s)^{\frac{(6-q)(6-r)}{q(6-r)+r(6-q)}}}\|v\|_{L^q(A_{R})}^{\frac{2q(6-r)}{q(6-r)+r(6-q)}}\|\theta\|_{L^r(A_{R})}^{\frac{2r(6-q)}{q(6-r)+r(6-q)}}.
\end{align*}
Applying Lemma \ref{Lem2.3} to the above function inequality, and taking $s=\sqrt{3}R$ and $t=2R$, we conclude that
\begin{align*}
f(R)\leq&f\left(\sqrt{3}R\right)\leq CR^{1-\frac{6}{p}}\|u\|_{L^p(A_{R})}^2+CR^{1-\frac{6}{q}}\|v\|_{L^{q}(A_{R})}^{2}+CR^{1-\frac{6}{r}}\|\theta\|_{L^{r}(A_{R})}^{2}\\
&+CR^{2-\frac{9}{p}}\|u\|_{L^p(A_{R})}^3+CR^{-\frac{(6-q)p'}{(3-p')q}}\|u\|_{L^p(A_{R})}^\frac{(6-q)p'}{(3-p')q}\|v\|_{L^q(A_{R})}^2\\
&+CR^{-\frac{(6-r)p'}{(3-p')r}}\|u\|_{L^p(A_{R})}^\frac{(6-r)p'}{(3-p')r}\|\theta\|_{L^r(A_{R})}^2\\
&+CR^{-\frac{(6-q)(6-r)}{q(6-r)+r(6-q)}}\|v\|_{L^q(A_{R})}^{\frac{2q(6-r)}{q(6-r)+r(6-q)}}\|\theta\|_{L^r(A_{R})}^{\frac{2r(6-q)}{q(6-r)+r(6-q)}}.
\end{align*}
Hence, it holds that
\begin{align*}
f(R)\leq&CR^{1-\frac{6}{p}+2\alpha}X^2_{p,\alpha}(R)+CR^{1-\frac{6}{q}+2\beta}Y^2_{q,\beta}(R)+CR^{1-\frac{6}{r}+2\gamma}Z^2_{r,\gamma}(R)\\
&+CR^{2-\frac{9}{p}+3\alpha}X^3_{p,\alpha}(R)+CR^{-\frac{(6-q)p'}{(3-p')q}+\frac{(6-q)p'}{(3-p')q}\alpha+2\beta}X^\frac{(6-q)p'}{(3-p')q}_{p,\alpha}(R)Y^2_{q,\beta}(R)\\
&+CR^{-\frac{(6-r)p'}{(3-p')r}+\frac{(6-r)p'}{(3-p')r}\alpha+2\gamma}X^\frac{(6-r)p'}{(3-p')r}_{p,\alpha}(R)Z^2_{r,\gamma}(R)\\
&+CR^\frac{-(6-q)(6-r)+2q(6-r)\beta+2r(6-q)\gamma}{q(6-r)+r(6-q)}Y^{\frac{2q(6-r)}{q(6-r)+r(6-q)}}_{q,\beta}(R)Z^{\frac{2r(6-q)}{q(6-r)+r(6-q)}}_{r,\gamma}(R).
\end{align*}
Letting $R=R_j\rightarrow+\infty$, thanks to
$$1-\frac{6}{p}+2\alpha<0,\;1-\frac{6}{q}+2\beta\leq0,\;1-\frac{6}{r}+2\gamma\leq0,\;2-\frac{9}{p}+3\alpha\leq0,$$
and
\begin{align*}
&-\frac{(6-q)p'}{(3-p')q}+\frac{(6-q)p'}{(3-p')q}\alpha+2\beta=\frac{(6-q)p}{(2p-3)q}\left(-1+\alpha+\frac{(4p-6)q}{(6-q)p}\beta\right)\leq0,\\
&-\frac{(6-r)p'}{(3-p')r}+\frac{(6-r)p'}{(3-p')r}\alpha+2\gamma=\frac{(6-r)p}{(2p-3)r}\left(-1+\alpha+\frac{(4p-6)r}{(6-r)p}\gamma\right)\leq0,
\end{align*}
\begin{align*}
&\frac{-(6-q)(6-r)+2q(6-r)\beta+2r(6-q)\gamma}{q(6-r)+r(6-q)}\\
=&\frac{(6-q)(6-r)}{q(6-r)+r(6-q)}\left(-1+\frac{2q}{6-q}\beta+\frac{2r}{6-r}\gamma\right)\leq0,
\end{align*}
we get $u,v,\theta\in L^6(\mathbb{R}^3)$ and $\nabla u,\nabla v,\nabla \theta\in L^2(\mathbb{R}^3)$. Furthermore, it holds that
\begin{equation}\label{ine3.33}
\aligned
&\lim_{R\rightarrow+\infty}\|u\|_{L^6(A_{R})}=\lim_{R\rightarrow+\infty}\|v\|_{L^6(A_{R})}=\lim_{R\rightarrow+\infty}\|\theta\|_{L^6(A_{R})}=0,\\
&\lim_{R\rightarrow+\infty}\|\nabla u\|_{L^2(A_{R})}=\lim_{R\rightarrow+\infty}\|\nabla v\|_{L^2(A_{R})}=\lim_{R\rightarrow+\infty}\|\nabla \theta\|_{L^2(A_{R})}=0.
\endaligned
\end{equation}

Combining  \eqref{ine3.3}, \eqref{ine3.13}, \eqref{ine3.14}, \eqref{ine3.17}, \eqref{ine3.20} and \eqref{ine3.24}, we have
\begin{align*}
f(s)&\leq\frac{1}{2}\|\nabla u\|_{L^2(B_t\backslash B_\frac{3R}{2})}^2+\frac{CR^2}{(t-s)^2}\left(\|u\|_{L^6(A_{R})}^2 +\|v\|_{L^6(A_{R})}^2 +\|\theta\|_{L^6(A_{R})}^2\right)\\
&+\frac{C}{t-s}R^{3-\frac{9}{p}}\|u\|_{L^{p}(A_{R})}^{3}+\frac{1}{2}\|v\|_{L^6(B_t\backslash B_\frac{3R}{2})}^2+\frac{C}{(t-s)^\frac{(6-q)p'}{(3-p')q}}\|u\|_{L^p(A_{R})}^\frac{(6-q)p'}{(3-p')q}\|v\|_{L^q(A_{R})}^2\\
&+\frac{1}{2}\|\theta\|_{L^6(B_t\backslash B_\frac{3R}{2})}^2+\frac{C}{(t-s)^\frac{(6-r)p'}{(3-p')r}}\|u\|_{L^p(A_{R})}^\frac{(6-r)p'}{(3-p')r}\|\theta\|_{L^r(A_{R})}^2\\
&+\frac{C}{t-s}\|v\|_{L^q(B_t\backslash B_\frac{3R}{2})}^{\frac{2q}{6-q}}\|\theta\|_{{L^r}(B_t\backslash B_\frac{3R}{2})}^{\frac{2r}{6-r}} \|v\|_{L^6(B_t\backslash B_\frac{3R}{2})}^{\frac{3(2-q)}{6-q}}\|\theta\|_{{L^6}(B_t\backslash B_\frac{3R}{2})}^\frac{3(2-r)}{6-r}.
\end{align*}
Taking $s=\sqrt{3}R$ and $t=2R$ in the above inequality, we get
\begin{align*}
f(R)\leq& f\left(\sqrt{3}R\right)\leq \frac{1}{2}\|\nabla u\|_{L^{2}(A_{R})}^2+C\left(\|u\|_{L^6(A_{R})}^2+\|v\|_{L^6(A_{R})}^2+\|\theta\|_{L^6(A_{R})}^2\right)\\
&+CR^{2-\frac{9}{p}}\|u\|_{L^p(A_{R})}^3+CR^{-\frac{(6-q)p'}{(3-p')q}}\|u\|_{L^p(A_{R})}^\frac{(6-q)p'}{(3-p')q}\|v\|_{L^q(A_{R})}^2\\
&+CR^{-\frac{(6-r)p'}{(3-p')r}}\|u\|_{L^p(A_{R})}^\frac{(6-r)p'}{(3-p')r}\|\theta\|_{L^r(A_{R})}^2\\
&+CR^{-1}\|v\|_{L^q(A_{R})}^{\frac{2q}{6-q}}\|\theta\|_{{L^r}(A_{R})}^{\frac{2r}{6-r}} \|v\|_{L^6(A_{R})}^{\frac{3(2-q)}{6-q}}\|\theta\|_{{L^6}(A_{R})}^\frac{3(2-r)}{6-r}.
\end{align*}
Therefore, we have
\begin{align*}
f(R)\leq&\frac{1}{2}\|\nabla u\|_{L^{2}(A_{R})}^2+C\left(\|u\|_{L^6(A_{R})}^2+\|v\|_{L^6(A_{R})}^2+\|\theta\|_{L^6(A_{R})}^2\right)\\
&+CR^{2-\frac{9}{p}+3\alpha}X^3_{p,\alpha}(R)+CR^{-\frac{(6-q)p'}{(3-p')q}+\frac{(6-q)p'}{(3-p')q}\alpha+2\beta}X^\frac{(6-q)p'}{(3-p')q}_{p,\alpha}(R)Y^2_{q,\beta}(R)\\
&+CR^{-\frac{(6-r)p'}{(3-p')r}+\frac{(6-r)p'}{(3-p')r}\alpha+2\gamma}X^\frac{(6-r)p'}{(3-p')r}_{p,\alpha}(R)Z^2_{r,\gamma}(R)\\
&+CR^{-1+\frac{2q}{6-q}\beta+\frac{2r}{6-r}\gamma}Y^{\frac{2q}{6-q}}_{q,\beta}(R)Z^{\frac{2r}{6-r}}_{r,\gamma}(R) \|v\|_{L^6(A_{R})}^{\frac{3(2-q)}{6-q}}\|\theta\|_{{L^6}(A_{R})}^\frac{3(2-r)}{6-r}.
\end{align*}
Letting $R=R_j\rightarrow+\infty$ and thanks to \eqref{ine3.33}, we obtain that $u=v=0$ and $\theta=0$.

{\bf Assume that $\mathrm{(A2)}$ holds.} Then there exists a sequence $R_j\nearrow+\infty$ such that
$$
\lim\limits_{j\rightarrow+\infty}X_{\alpha,p,R_j}<+\infty,\;\lim\limits_{j\rightarrow+\infty}Y_{\beta,q,R_j}<+\infty,\;\lim\limits_{j\rightarrow+\infty}Z_{\gamma,r,R_j}<+\infty.
$$
We only consider the case $p,q,r\in(\frac{3}{2},3)\times[1,2)\times[1,2)$ as the remaining cases can be treated similarly.

Combining  \eqref{ine3.3}, \eqref{ine3.12}, \eqref{ine3.16}, \eqref{ine3.17}, \eqref{ine3.20} and \eqref{ine3.23}, we derive that
\begin{align*}
f(s)\leq&\frac{1}{2}f(t)+\frac{C}{(t-s)^{\frac{6}{p}-1}}\|u\|_{L^p(A_{R})}^{2}+\frac{C}{(t-s)^2}R^{3-\frac{6}{p}}\|u\|_{L^{p}(A_{R})}^{2}+\frac{C}{(t-s)^{\frac{6}{q}-1}}\|v\|_{L^{q}(A_{R})}^{2}\\
&+\frac{C}{(t-s)^2}R^{3-\frac{6}{q}}\|v\|_{L^q(A_{R})}^{2}+\frac{C}{(t-s)^{\frac{6}{r}-1}}\|\theta\|_{L^{r}(A_{R})}^{2}+\frac{C}{(t-s)^2}R^{3-\frac{6}{r}}\|\theta\|_{L^r(A_{R})}^{2}\\
&+\frac{C}{(t-s)^\frac{6-p}{2p-3}}\|u\|_{L^p(A_{R})}^\frac{3p}{2p-3}+\frac{C}{(t-s)^\frac{(6-q)p'}{(3-p')q}}\|u\|_{L^p(A_{R})}^\frac{(6-q)p'}{(3-p')q}\|v\|_{L^q(A_{R})}^2\\
&+\frac{C}{(t-s)^\frac{(6-r)p'}{(3-p')r}}\|u\|_{L^p(A_{R})}^\frac{(6-r)p'}{(3-p')r}\|\theta\|_{L^r(A_{R})}^2\\
&+\frac{C}{(t-s)^{\frac{(6-q)(6-r)}{q(6-r)+r(6-q)}}}\|v\|_{L^q(A_{R})}^{\frac{2q(6-r)}{q(6-r)+r(6-q)}}\|\theta\|_{L^r(A_{R})}^{\frac{2r(6-q)}{q(6-r)+r(6-q)}}.
\end{align*}
Applying Lemma \ref{Lem2.3} to the above function inequality, we obtain
\begin{align}\label{add1}
f(R)\leq&CR^{1-\frac{6}{p}}\|u\|_{L^p(A_{R})}^2+CR^{1-\frac{6}{q}}\|v\|_{L^{q}(A_{R})}^{2}+CR^{1-\frac{6}{r}}\|\theta\|_{L^{r}(A_{R})}^{2}+CR^\frac{p-6}{2p-3}\|u\|_{L^p(A_{R})}^\frac{3p}{2p-3}\notag\\
&+CR^{-\frac{(6-q)p'}{(3-p')q}}\|u\|_{L^p(A_{R})}^\frac{(6-q)p'}{(3-p')q}\|v\|_{L^q(A_{R})}^2+CR^{-\frac{(6-r)p'}{(3-p')r}}\|u\|_{L^p(A_{R})}^\frac{(6-r)p'}{(3-p')r}\|\theta\|_{L^r(A_{R})}^2\\
&+CR^{-\frac{(6-q)(6-r)}{q(6-r)+r(6-q)}}\|v\|_{L^q(A_{R})}^{\frac{2q(6-r)}{q(6-r)+r(6-q)}}\|\theta\|_{L^r(A_{R})}^{\frac{2r(6-q)}{q(6-r)+r(6-q)}}.\notag
\end{align}
Hence, we deduce
\begin{align*}
f(R)\leq&CR^{1-\frac{6}{p}+2\alpha}X^2_{p,\alpha}(R)+CR^{1-\frac{6}{q}+2\beta}Y^2_{q,\beta}(R)+CR^{1-\frac{6}{r}+2\gamma}Z^2_{r,\gamma}(R)\\
&+CR^{\frac{p-6}{2p-3}+\frac{3p}{2p-3}\alpha}X^\frac{3p}{2p-3}_{p,\alpha}(R)+CR^{-\frac{(6-q)p'}{(3-p')q}+\frac{(6-q)p'}{(3-p')q}\alpha+2\beta}X^\frac{(6-q)p'}{(3-p')q}_{p,\alpha}(R)Y^2_{q,\beta}(R)\\
&+CR^{-\frac{(6-r)p'}{(3-p')r}+\frac{(6-r)p'}{(3-p')r}\alpha+2\gamma}X^\frac{(6-r)p'}{(3-p')r}_{p,\alpha}(R)Z^2_{r,\gamma}(R)\\
&+CR^\frac{-(6-q)(6-r)+2q(6-r)\beta+2r(6-q)\gamma}{q(6-r)+r(6-q)}Y^{\frac{2q(6-r)}{q(6-r)+r(6-q)}}_{q,\beta}(R)Z^{\frac{2r(6-q)}{q(6-r)+r(6-q)}}_{r,\gamma}(R).
\end{align*}
Letting $R=R_j\rightarrow+\infty$, we get $u,v,\theta\in L^6(\mathbb{R}^3)$ and $\nabla u,\nabla v,\nabla \theta\in L^2(\mathbb{R}^3)$. Furthermore, \eqref{ine3.33} holds.

Combining  \eqref{ine3.3}, \eqref{ine3.13}, \eqref{ine3.15}, \eqref{ine3.18}, \eqref{ine3.21} and \eqref{ine3.24}, we have
\begin{align*}
f(R)\leq&\frac{1}{2}\|\nabla u\|_{L^{2}(A_{R})}^2+C\left(\|u\|_{L^6(A_{R})}^2 +\|v\|_{L^6(A_{R})}^2+\|\theta\|_{L^6(A_{R})}^2\right)\\
&+CR^{-1}\|u\|_{L^p(A_{R})}^{\frac{3p}{6-p}}\|u\|_{L^6(A_{R})}^{\frac{18-6p}{6-p}}+CR^{-1}\|u\|_{L^p(A_{R})}\|v\|_{L^q(A_{R})}^\frac{2(3-p')q}{(6-q)p'}\|v\|_{L^6(A_{R})}^\frac{12p'-6q}{(6-q)p'}\\
&+CR^{-1}\|u\|_{L^p(A_{R})}\|\theta\|_{L^r(A_{R})}^\frac{2(3-p')r}{(6-r)p'}\|\theta\|_{L^6(A_{R})}^\frac{12p'-6r}{(6-r)p'}\\
&+CR^{-1}\|v\|_{L^q(B_t\backslash B_\frac{3R}{2})}^{\frac{2q}{6-q}}\|\theta\|_{{L^r}(B_t\backslash B_\frac{3R}{2})}^{\frac{2r}{6-r}} \|v\|_{L^6(B_t\backslash B_\frac{3R}{2})}^{\frac{3(2-q)}{6-q}}\|\theta\|_{{L^6}(B_t\backslash B_\frac{3R}{2})}^\frac{3(2-r)}{6-r}.
\end{align*}
Consequently, we derive
\begin{align*}
f(R)\leq&\frac{1}{2}\|\nabla u\|_{L^{2}(A_{R})}^2+C\left(\|u\|_{L^6(A_{R})}^2 +\|v\|_{L^6(A_{R})}^2+\|\theta\|_{L^6(A_{R})}^2\right)\\
&+CR^{\frac{3p}{6-p}\alpha-1}X^{\frac{3p}{6-p}}_{p,\alpha}(R)\|u\|_{L^6(A_{R})}^{\frac{18-6p}{6-p}}\\
&+CR^{-1+\alpha+\frac{2(3-p')q}{(6-q)p'}\beta}X_{p,\alpha}(R)Y^\frac{2(3-p')q}{(6-q)p'}_{q,\beta}(R)\|v\|_{L^6(A_{R})}^\frac{12p'-6q}{(6-q)p'}\\
&+CR^{-1+\alpha+\frac{2(3-p')r}{(6-r)p'}\gamma}X_{p,\alpha}(R)Z^\frac{2(3-p')r}{(6-r)p'}_{r,\gamma}(R)\|\theta\|_{L^6(A_{R})}^\frac{12p'-6r}{(6-r)p'}\\
&+CR^{-1+\frac{2q}{6-q}\beta+\frac{2r}{6-r}\gamma}Y^{\frac{2q}{6-q}}_{q,\beta}(R)Z^{\frac{2r}{6-r}} _{r,\gamma}(R) \|v\|_{L^6(A_{R})}^{\frac{3(2-q)}{6-q}}\|\theta\|_{{L^6}(A_{R})}^\frac{3(2-r)}{6-r}.
\end{align*}
Letting $R=R_j\rightarrow+\infty$ and using \eqref{ine3.33}, we conclude that $u=v=0$ and $\theta=0$.
\end{proof}

\begin{Rem}\label{Rem3.1}
It should be noted that there are some strict inequalities in our assumptions of {\rm Theorem \ref{main1}}. Actually, the strict inequalities can be weakened in some special circumstances. We give some concrete remarks on those endpoint cases.
\begin{itemize}
\item[(i)] When $(p,q)\in(\frac{3}{2},3)\times[2p',6]$, the inequality in {\rm Assumption \ref{a1.2}(i)} becomes
\begin{equation*}
\alpha+2\beta<\frac{3}{p}+\frac{6}{q}-2,
\end{equation*}
which can be replaced by the equality
$$
\alpha+2\beta=\frac{3}{p}+\frac{6}{q}-2,
$$
but the price is that we need to assume in addition that
$$\liminf\limits_{R\rightarrow+\infty}X_{p,\alpha}(R)=0,\;\limsup\limits_{R\rightarrow+\infty}[Y_{q,\beta}(R)+Z_{r,\gamma}(R)]<+\infty,\text{ or }$$
$$\limsup\limits_{R\rightarrow+\infty}[X_{p,\alpha}(R)+Z_{r,\gamma}(R)]<+\infty,\;\liminf\limits_{R\rightarrow+\infty}Y_{q,\beta}(R)=0.$$
Indeed, when $(p,q)\in(\frac{3}{2},3)\times[2p',6]$, we need to deal with the term
$$CR^{2-\frac{3}{p}-\frac{6}{q}+\alpha+2\beta}X_{p,\alpha}(R)Y^2_{q,\beta}(R)$$
and require this term to tend to zero.
\item[(ii)]
Similarly to $\mathrm{(i)}$, when $(p,r)\in(\frac{3}{2},3)\times[2p',6]$, the inequality in {\rm Assumption \ref{a1.2}(ii)} becomes
\begin{equation*}
\alpha+2\gamma<\frac{3}{p}+\frac{6}{r}-2,
\end{equation*}
which can be replaced by the equality
$$
\alpha+2\gamma=\frac{3}{p}+\frac{6}{r}-2,
$$
but the price is that we need to assume in addition that
$$\liminf\limits_{R\rightarrow+\infty}X_{p,\alpha}(R)=0,\;\limsup\limits_{R\rightarrow+\infty}[Y_{q,\beta}(R)+Z_{r,\gamma}(R)]<+\infty,\text{ or }$$
$$\limsup\limits_{R\rightarrow+\infty}[X_{p,\alpha}(R)+Y_{q,\beta}(R)]<+\infty,\;\liminf\limits_{R\rightarrow+\infty}Z_{r,\gamma}(R)=0.$$
\item[(iii)]
When $(q,r)\in[2,6]\times[2,6]$, the inequality in {\rm Assumption \ref{a1.2}(iii)} becomes
\begin{equation*}
\beta+\gamma<\frac{3}{q}+\frac{3}{r}-2,
\end{equation*}
which can be replaced by the equality
$$\beta+\gamma=\frac{3}{q}+\frac{3}{r}-2,$$
but the price is that we need to assume in addition that
\begin{itemize}
\item[(1)] when $p\in[3,\frac{9}{2}]$, we assume
$$\liminf\limits_{R\rightarrow+\infty}Y_{q,\beta}(R)=0\;\text{ or }\liminf\limits_{R\rightarrow+\infty}Z_{r,\gamma}(R)=0.$$
\item[(2)] when $p\in(3,\frac{3}{2})$, we assume
$$\liminf\limits_{R\rightarrow+\infty}Y_{q,\beta}(R)=0,\;\limsup\limits_{R\rightarrow+\infty}[X_{p,\alpha}(R)+Z_{r,\gamma}(R)]<+\infty,\text{ or }$$
$$\limsup\limits_{R\rightarrow+\infty}[X_{p,\alpha}(R)+Y_{q,\beta}(R)]<+\infty,\;\liminf\limits_{R\rightarrow+\infty}Z_{r,\gamma}(R)=0.$$
\end{itemize}
Indeed, when $(q,r)\in[2,6]\times[2,6]$, we need to deal with the term
$$CR^{2-\frac{3}{q}-\frac{3}{r}+\beta+\gamma}Y_{q,\beta}(R)Z_{r,\gamma}(R)$$
and require this term to tend to zero.
\item[(iv)] When $(p,q)\in(\frac{3}{2},3)\times[2p',6]$, {\rm Assumption \ref{a1.2}(i)}
can be replaced by $\alpha=\frac{3}{p}-1$ and  $\beta=\frac{3}{q}-\frac{1}{2}$. Indeed, using the H\"{o}lder inequality, we have
$$CR^{2-\frac{3}{p}-\frac{6}{q}+\alpha+2\beta}X_{p,\alpha}(R)Y^2_{q,\beta}(R)\leq CX_{p,\alpha}(R)\|v\|_{L^6(A_R)}^2.$$
\item[(v)] Similarly to $\mathrm{(iv)}$, when $(p,r)\in(\frac{3}{2},3)\times[2p',6]$, {\rm Assumption \ref{a1.2}(ii)}
can be replaced by $\alpha=\frac{3}{p}-1$ and $\gamma=\frac{3}{r}-\frac{1}{2}$.
\item[(vi)] When $(q,r)\in[2,6]\times[2,6]$, {\rm Assumption \ref{a1.2}(iii)} can be replaced by
$(\beta,\gamma)=(0,\frac{3}{r}-\frac{1}{2})$ with $q=2$, or $(\beta,\gamma)=(\frac{3}{q}-\frac{1}{2},0)$ with $r=2$.
Indeed, using the H\"{o}lder inequality, we have
$$CR^{2-\frac{3}{q}-\frac{3}{r}+\beta+\gamma}Y_{q,\beta}(R)Z_{r,\gamma}(R)\leq CY_{q,\beta}(R)\|\theta\|_{L^6(A_R)},\text{ with }q=2,$$
$$CR^{2-\frac{3}{q}-\frac{3}{r}+\beta+\gamma}Y_{q,\beta}(R)Z_{r,\gamma}(R)\leq C\|v\|_{L^6(A_R)}Z_{r,\gamma}(R),\text{ with }r=2.$$
\end{itemize}
\end{Rem}

\begin{Rem}\label{Rem3.2}
Since the range of $p$ is divided into two cases: $\frac{3}{2}<p<3$ and $3\leq p\leq\frac{9}{2}$, the range of $q$ is divided into three cases: $1\leq q<2$, $2\leq q<2p'$ and $2p'\leq q\leq6$, the range of $r$ is also divided into three cases: $1\leq r<2$, $2\leq r<2p'$ and $2p'\leq r\leq6$,
by the multiplication principle, there are $18$ cases in {\rm Theorem \ref{main1}}. However, we mention that the case of $\frac{3}{2}<p<3$, $2p'\leq q\leq6$, $2p'\leq r\leq6$
would not happen, since we can derive a contradiction
$$
0\leq\beta+\gamma<\frac{3}{q}+\frac{3}{r}-2\leq \frac{3}{2p'}+\frac{3}{2p'}-2=1-\frac{3}{p}<0.
$$
Therefore, {\rm Theorem \ref{main1}} includes $17$ different cases.
\end{Rem}

\begin{proof}[{\bf Proof of Corollary \ref{Cor1.1}}]
Since $u\in L^p(\mathbb{R}^3)$, $v\in L^q(\mathbb{R}^3)$, $\theta\in L^r(\mathbb{R}^3)$, we obtain
$$\lim_{R\rightarrow+\infty}\|u\|_{L^p\left(A_{R}\right)}=\lim_{R\rightarrow+\infty}\|v\|_{L^q\left(A_{R}\right)}=\lim_{R\rightarrow+\infty}\|\theta\|_{L^r\left(A_{R}\right)}=0.$$
Applying Theorem \ref{main1} with $\alpha=\beta=\gamma=0$ and observing the endpoint cases in Remark \ref{Rem3.1}(i) (ii) (iii), we obtain the conclusions.
\end{proof}

\begin{proof}[{\bf Proof of Corollary \ref{Cor1.2}}]
We only prove the case when $\mathrm{(C1)}$ holds. Since $\mathrm{(C1)}$ indicates
$$
\lim_{R\rightarrow+\infty}\|u\|_{L^\frac{9}{2}\left(A_{R}\right)}=\lim_{R\rightarrow+\infty}\|\theta\|_{L^\frac{9}{4}\left(A_{R}\right)}=0,\;\limsup_{R\rightarrow+\infty}\|v\|_{L^\frac{9}{2}\left(A_{R}\right)}<+\infty,
$$
taking $\alpha=\beta=\gamma=0$, $p=q=\frac{9}{2}$, $r=\frac{9}{4}$ in Theorem \ref{main1} and considering the endpoint cases in Remark \ref{Rem3.1}(i) (iii), we obtain the desired result.
\end{proof}

\section{Proof of Theorem \ref{main2}}\label{sec4}

In this section, let $\eta$ be a cut-off function defined by
\begin{align*}
	\eta(x)= \begin{cases}
		1, & |x| <\frac{3R}{2}, \\
		4-\frac{2}{R}|x|,& \frac{3R}{2}\leq |x|\leq 2R,\\
		0, & |x| >2R.
	\end{cases}
\end{align*}
For any $R>0$, we define the energy function $E(R)$ by
\begin{align}\label{ine4.1}
	\aligned
	E(R)=\int_{\mathbb{R}^3}\left(|\nabla u|^{2}+|\nabla v|^{2}+|\nabla \theta|^{2}\right)\eta(x)d x.
	\endaligned
\end{align}
We will show some properties of $E(R)$ in the next two lemmas. In Lemma \ref{Lem4.1}, we establish a lower bound estimate for the derivative $E'(R)$.
In Lemma \ref{Lem4.2}, we establish an upper bound estimate for $E(R)$.
\begin{Lem}\label{Lem4.1}
	Let $(u,\pi,v,\theta)$ be a smooth solution of \eqref{equ1.1} and $E(R)$ be defined by \eqref{ine4.1}. Then we have
	\begin{align}\label{ine4.2}
		E'(R)\geq\frac{3}{R}\int_{A_R}\left(|\nabla u|^{2}+|\nabla v|^{2}+|\nabla \theta|^{2}\right)dx.
	\end{align}
Consequently, $E(R)$ is a non-decreasing function with respect to $R$.
\end{Lem}
\begin{proof}
	We rewrite $E(R)$ as the following form
	$$
	E(R)=\int_{B_{\frac{3R}{2}}}\left(|\nabla u|^{2}+|\nabla v|^{2}+|\nabla \theta|^{2}\right) dx+\int_{A_R}\left(|\nabla u|^{2}+|\nabla v|^{2}+|\nabla \theta|^{2}\right)\left(-\frac{2}{R}|x|+4\right)dx.
	$$
	By a direct calculation, we obtain
	\begin{align*}
		E'(R)=&\frac{3}{2}\int_{\partial B_{\frac{3R}{2}}}\left(|\nabla u|^{2}+|\nabla v|^{2}+|\nabla \theta|^{2}\right) dS+\int_{A_R}\left(|\nabla u|^{2}+|\nabla v|^{2}+|\nabla \theta|^{2}\right)\frac{2}{R^2}|x|dx\\
		&+2\int_{\partial B_{2R}}\left(|\nabla u|^{2}+|\nabla v|^{2}+|\nabla \theta|^{2}\right)\left(-\frac{2}{R}\cdot2R+4\right)dS\\
		&-\frac{3}{2}\int_{\partial B_{\frac{3R}{2}}}\left(|\nabla u|^{2}+|\nabla v|^{2}+|\nabla \theta|^{2}\right)\left(-\frac{2}{R}\cdot\frac{3R}{2}+4\right)dS\\
		=&\int_{A_R}\left(|\nabla u|^{2}+|\nabla v|^{2}+|\nabla \theta|^{2}\right)\frac{2}{R^2}|x|dx\\
		\geq&\frac{3}{R}\int_{A_R}\left(|\nabla u|^{2}+|\nabla v|^{2}+|\nabla \theta|^{2}\right)dx.
	\end{align*}
\end{proof}

We use $\overline{\varphi}_R$ to represent the mean value of $\varphi$ on the annulus $A_R$. Denote
\begin{align}\label{eq4.3}
U=u-\overline{u}_R,\;V=v-\overline{v}_R,\;\Theta=\theta-\overline{\theta}_R.
\end{align}
\begin{Lem}\label{Lem4.2}
	Let $(u,\pi,v,\theta)$ be a smooth solution of \eqref{equ1.1} and $E(R)$ be defined by \eqref{ine4.1}. Then for any $R>0$, it holds that
	\begin{align}\label{ine4.3}
		E(R)\leq& C\left(\|\nabla u\|_{L^2(A_R)}^2+\|\nabla v\|_{L^2(A_R)}^2+\|\nabla \theta\|_{L^2(A_R)}^2\right)+CR^{\frac{1}{2}-\frac{3}{q}}\|v\|_{L^q(A_R)}\|\nabla v\|_{L^2(A_R)}\notag\\
		&+CR^{-1}\|u\|_{L^p(A_R)}\|U\|_{L^{2p'}(A_R)}^2+CR^{-1}\|u\|_{L^p(A_R)}\|V\|_{L^{2p'}(A_R)}^2\notag\\
		&+CR^{-1}\|u\|_{L^p(A_R)}\|\Theta\|_{L^{2p'}(A_R)}^2+CR^{2-\frac{3}{p}-\frac{6}{q}}\|u\|_{L^p(A_R)}\|v\|_{L^q(A_R)}^{2}\\
		&+CR^{-1}\|V\|_{L^2(A_R)}\|\Theta\|_{L^2(A_R)}+CR^{2-\frac{3}{q}-\frac{3}{r}}\|v\|_{L^{q}(A_R)}\|\theta\|_{L^r(A_R)},\notag
	\end{align}
where $U$, $V$, $\Theta$ are defined by \eqref{eq4.3}.
\end{Lem}
\begin{proof}
	By Lemma \ref{Lem2.1}, there exists $w\in W_{0}^{1,\sigma}(A_R)$ such that $w$ satisfies the following equation
	\begin{align*}
		\mathrm{div} w=U\cdot\nabla\eta \text{ in }A_R,
	\end{align*}
	with the estimate
	\begin{align}\label{ine4.4}
		\|\nabla w\|_{L^\sigma(A_R)}\leq C\|U\cdot\nabla\eta\|_{L^\sigma(A_R)}\leq CR^{-1}\|U\|_{L^\sigma(A_R)},
	\end{align}
	for any $1<\sigma<+\infty$. We extend $w$ by zero to $B_\frac{3R}{2}$, then $w\in W_{0}^{1,\sigma}(B_{2R}).$
	
	Obviously, $(U,\pi,v,\Theta)$ satisfies
	\begin{align}\label{equ4.5}
		\left\{
		\begin{array}{ll}
			-\Delta U+(u\cdot\nabla) U+\nabla \pi+\divg(v\otimes v)=0,  \\
			-\Delta v+(u\cdot\nabla) v+\nabla \Theta+(v\cdot\nabla)U=0, \\
			-\Delta \Theta+u\cdot\nabla\Theta+\divg v=0,\\
			\divg U=0.
		\end{array}
		\right.
	\end{align}
Denote the $i$-th components of $U$ and $v$ by $U_i$ and $v_i$, respectively. Multiply both sides of $\eqref{equ4.5}_{1}$, $\eqref{equ4.5}_{2}$ and $\eqref{equ4.5}_{3}$ by $U \eta-w$, $v \eta$ and $\Theta \eta$ respectively, integrate over $B_{2R}$ and apply integration by parts. This procedure yields
	\begin{align}\label{ine4.6}
		E&(R)=\int_{B_{2R}}\left(|\nabla U|^{2}\eta+|\nabla v|^{2}\eta+|\nabla \Theta|^{2}\eta\right) d x\notag\\
		=&-\int_{B_{2R}}\Big[\left(\nabla \eta \cdot \nabla\right)U\cdot U+\left(\nabla \eta \cdot \nabla\right)v\cdot v+\nabla \Theta \cdot \left(\Theta\nabla \eta\right)\Big]d x+\int_{B_{2R}}\nabla U:\nabla w dx\\
		&+\frac{1}{2} \int_{B_{2R}}(|U|^2+|v|^2+|\Theta|^2)u \cdot \nabla \eta d x- \int_{B_{2R}}(u \cdot\nabla )w \cdot U dx+\int_{B_{2R}}(U \cdot v )v \cdot \nabla\eta dx\notag \\
		&-\int_{B_{2R}}(v \cdot\nabla)w\cdot v dx+\int_{B_{2R}}\Theta v\cdot\nabla \eta dx.\notag
	\end{align}
	With the help of the H\"{o}lder inequaity and the Poincar\'{e} inequality(see Lemma \ref{Lem2.2a}), we have
	\begin{align}\label{ine4.7}
		&\left|-\int_{B_{2R}}\left(\nabla \eta \cdot \nabla\right)U\cdot Ud x
		-\int_{B_{2R}}\nabla \Theta \cdot \left(\Theta\nabla \eta\right)d x
		\right|\notag\\
		\leq& CR^{-1}\|\nabla U\|_{L^2 (A_R)}\|U\|_{L^2 (A_R)}+CR^{-1}\|\nabla \Theta\|_{L^2 (A_R)}\|\Theta\|_{L^2 (A_R)}\\
        \leq& C\|\nabla U\|_{L^2 (A_R)}^2+C\|\nabla \Theta\|_{L^2 (A_R)}^2\notag\\
		=& C\|\nabla u\|_{L^2 (A_R)}^2+C\|\nabla \theta\|_{L^2 (A_R)}^2.\notag
	\end{align}
	Using the H\"{o}lder inequality, the Minkowski inequality and the Poincar\'{e} inequality, we derive
	\begin{align}\label{ine4.8}
		\left|\int_{B_{2R}}\left(\nabla \eta \cdot \nabla\right)v\cdot vd x
		\right|&\leq CR^{-1}\|\nabla v\|_{L^2(A_R)}\|v\|_{L^2(A_R)}\notag\\
		&\leq CR^{-1}\|\nabla v\|_{L^2(A_R)}\left(\|V\|_{L^2(A_R)}+ \|\overline{v}_R\|_{L^2(A_R)}\right)\\
		&\leq C\|\nabla v\|_{L^2 (A_R)}^2+CR^{-1}\|\nabla v\|_{L^2(A_R)}\cdot CR^{\frac{3}{2}-\frac{3}{q}}\|v\|_{L^q(A_R)}\notag\\
		&\leq C\|\nabla v\|_{L^2 (A_R)}^2+CR^{\frac{1}{2}-\frac{3}{q}}\|v\|_{L^q(A_R)}\|\nabla v\|_{L^2(A_R)}.\notag
	\end{align}
	Using the H\"{o}lder inequality, \eqref{ine4.4} and the Poincar\'{e} inequality, we get
	\begin{align}\label{ine4.9}
		\left|\int_{B_{2R}}\nabla U:\nabla w dx\right|&\leq\|\nabla U\|_{L^2(A_R)}\|\nabla w\|_{L^2(A_R)}\notag\\
		&\leq \|\nabla U\|_{L^2(A_R)}\cdot CR^{-1}\|U\|_{L^2(A_R)}\\
		&\leq C\|\nabla u\|_{L^2(A_R)}^2.\notag
	\end{align}
	Using the H\"{o}lder inequality, we have
	\begin{align}\label{ine4.13}
		&\left|\int_{B_{2R}}|U|^2u \cdot \nabla \eta d x\right|+\left|\int_{B_{2R}}|\Theta|^2 u \cdot \nabla \eta d x\right|\notag\\
		\leq& CR^{-1}\|u\|_{L^p(A_R)}\|U\|_{L^{2p'}(A_R)}^2+CR^{-1}\|u\|_{L^p(A_R)}\|\Theta\|_{L^{2p'}(A_R)}^2.
	\end{align}

	Using the Minkowski inequality and the H\"{o}lder inequality, we derive
	\begin{align}\label{ine4.10}
		\|U\|_{L^p(A_R)}&\leq \|u\|_{L^p(A_R)}+\|\overline{u}_R\|_{L^p(A_R)}\notag\\
		&\leq\|u\|_{L^p(A_R)}+CR^\frac{3}{p}|\overline{u}_R|\\
		&\leq C\|u\|_{L^p(A_R)}.\notag
	\end{align}
	Similarly, we can derive
	\begin{align}
		&\|V\|_{L^q(A_R)}\leq C\|v\|_{L^q(A_R)},\label{ine4.11}\\
        &\|\Theta\|_{L^r(A_R)}\leq C\|\theta\|_{L^r(A_R)}.\label{ine4.12}
	\end{align}

	Using the H\"{o}lder inequality,  \eqref{ine4.4}, \eqref{ine4.10} and the Minkowski inequality, we obtain
	\begin{align}\label{ine4.14}
		&\left|\int_{B_{2R}}|v|^2 u \cdot \nabla \eta d x\right|+\left|\int_{B_{2R}}(U \cdot v )v \cdot \nabla\eta dx\right|+\left|\int_{B_{2R}}(v \cdot\nabla)w\cdot v dx\right|\notag\\
		\leq& \left(CR^{-1}\|u\|_{L^p(A_R)}+CR^{-1}\|U\|_{L^p(A_R)}+\|\nabla w\|_{L^{p}(A_R)}\right)\|v\|_{L^{2p'}(A_R)}^2\notag\\
        \leq& CR^{-1}\|u\|_{L^p(A_R)}\|v\|_{L^{2p'}(A_R)}^2\notag\\
		\leq& CR^{-1}\|u\|_{L^p(A_R)}\left(\|V\|_{L^{2p'}(A_R)}^2+\|\overline{v}_R\|_{L^{2p'}(A_R)}^2 \right)\\
		\leq& CR^{-1}\|u\|_{L^p(A_R)}\|V\|_{L^{2p'}(A_R)}^2+CR^{-1}\|u\|_{L^p(A_R)}\cdot CR^{3-\frac{3}{p}-\frac{6}{q}}\|v\|_{L^q(A_R)}^2\notag\\
		=& CR^{-1}\|u\|_{L^p(A_R)}\|V\|_{L^{2p'}(A_R)}^2+CR^{2-\frac{3}{p}-\frac{6}{q}}\|u\|_{L^p(A_R)}\|v\|_{L^q(A_R)}^2.\notag
	\end{align}
	Using the H\"{o}lder inequality and  \eqref{ine4.4}, we obtain
	\begin{align}\label{ine4.15}
		\left|\int_{B_{2R}}(u \cdot\nabla )w \cdot U dx\right|&\leq \|u\|_{L^p(A_R)}\|\nabla w\|_{L^{2p'}(A_R)}\|U\|_{L^{2p'}(A_R)}\notag\\
		&\leq CR^{-1}\|u\|_{L^p(A_R)}\|U\|_{L^{2p'}(A_R)}^2.
	\end{align}
	Using the Minkowski inequality, the H\"{o}lder inequality and \eqref{ine4.12}, we derive
	\begin{align}\label{ine4.16}
		\left|\int_{B_{2R}}\Theta v\cdot\nabla \eta dx\right|&\leq \left|\int_{B_{2R}}\Theta V\cdot\nabla \eta dx\right|+\left|\int_{B_{2R}}\Theta \overline{v}_R\cdot\nabla \eta dx\right|\notag\\
		&\leq CR^{-1}\|V\|_{L^2(A_R)}\|\Theta\|_{L^2(A_R)}+CR^{-1}\|\Theta\|_{L^r(A_R)}\|\overline{v}_R\|_{L^{r'}(A_R)}\\
		&\leq CR^{-1}\|V\|_{L^2(A_R)}\|\Theta\|_{L^2(A_R)}+CR^{2-\frac{3}{q}-\frac{3}{r}}\|v\|_{L^{q}(A_R)}\|\theta\|_{L^r(A_R)}.\notag
	\end{align}
	Combining \eqref{ine4.6}, \eqref{ine4.7}, \eqref{ine4.8}, \eqref{ine4.9}, \eqref{ine4.13}, \eqref{ine4.14}, \eqref{ine4.15} and \eqref{ine4.16}, we conclude that \eqref{ine4.3} holds.
\end{proof}

\begin{Rem}\label{Rem4.1}
In the proof of {\rm Lemma \ref{Lem4.2}}, we transform \eqref{equ1.1} by variable substitutions
$$U=u-\overline{u}_R,\;\Theta=\theta-\overline{\theta}_R,$$
but we do not make the translation transformation for $v$. Actually, if we also perform the translation transformation for $v$:
$$V=v-\overline{v}_R,$$
the extra term
$$
\int_{B_{2R}}\overline{v}_R\otimes V:(\eta\nabla U)dx
$$
originating from $\divg(v\otimes v)$ can neither be cancelled out by other terms nor be properly controlled.
It is this asymmetric treatment that leads to the breakdown of the symmetric role between $v$ and $\theta$. Compared with $\theta$, we need to deal with additional terms $K_2$ and $K_6$ for $v$, where the definitions of $K_2$ and $K_6$ are given below.
\end{Rem}

For the sake of convenience, we denote the eight terms on the right hand side of \eqref{ine4.3} by $K_1$, $K_2$, $\cdots$, $K_8$, respectively, i.e.
	\begin{align*}
		&K_1=C\left(\|\nabla u\|_{L^2(A_R)}^2+\|\nabla v\|_{L^2(A_R)}^2+\|\nabla \theta\|_{L^2(A_R)}^2\right),\;K_2=CR^{\frac{1}{2}-\frac{3}{q}}\|v\|_{L^q(A_R)}\|\nabla v\|_{L^2(A_R)},\\
		&K_3=CR^{-1}\|u\|_{L^p(A_R)}\|U\|_{L^{2p'}(A_R)}^2,\;K_4=CR^{-1}\|u\|_{L^p(A_R)}\|V\|_{L^{2p'}(A_R)}^2,\\
		&K_5=CR^{-1}\|u\|_{L^p(A_R)}\|\Theta\|_{L^{2p'}(A_R)}^2,\;K_6=CR^{2-\frac{3}{p}-\frac{6}{q}}\|u\|_{L^p(A_R)}\|v\|_{L^q(A_R)}^{2},\\
		&K_7=CR^{-1}\|V\|_{L^2(A_R)}\|\Theta\|_{L^2(A_R)},\;K_8=CR^{2-\frac{3}{q}-\frac{3}{r}}\|v\|_{L^{q}(A_R)}\|\theta\|_{L^r(A_R)}.
	\end{align*}
In order to control $K_1$, we need to estimate $f(R)$, which is defined by \eqref{ine3.1}.

\begin{Lem}\label{Lem4.3}
Let the assumptions be the same as those in {\rm Theorem \ref{main2}}. Then there exist three positive constants $R_1>3$, $A$ and $C$ such that
	\begin{align}\label{ine4.17}
		f(R)\leq C(\ln R)^{A},\;\forall R\geq R_1.
	\end{align}
\begin{proof}
No matter whether $\mathrm{(B1)}$ or $\mathrm{(B2)}$ holds, there always exist two positive constants $R_1>3$ and $C$ such that the following three inequalities hold for any $R\geq R_1$:	
\begin{align}
&\|u\|_{L^p\left(A_R\right)}\leq CR^\alpha(\ln R)^\lambda,\label{add4.19}\\
&\|v\|_{L^q\left(A_R\right)}\leq CR^\beta(\ln R)^\mu,\label{add4.20}\\
&\|\theta\|_{L^r\left(A_R\right)}\leq CR^\gamma(\ln R)^\nu.\label{add4.21}
\end{align}
We only consider the case $p,q,r\in(\frac{3}{2},3)\times[1,2)\times[1,2)$ as the remaining cases can be treated similarly.	
Combining \eqref{add1}, \eqref{add4.19}, \eqref{add4.20} and \eqref{add4.21}, we obtain
	\begin{align*}
		f(R)\leq&CR^{1-\frac{6}{p}+2\alpha}(\ln R)^{2\lambda}+CR^{1-\frac{6}{q}+2\beta}(\ln R)^{2\mu}+CR^{1-\frac{6}{r}+2\gamma}(\ln R)^{2\nu}\\
		+&CR^{\frac{p-6}{2p-3}+\frac{3p}{2p-3}\alpha}(\ln R)^{\frac{3p}{2p-3}\lambda}+CR^{-\frac{(6-q)p'}{(3-p')q}+\frac{(6-q)p'}{(3-p')q}\alpha+2\beta}(\ln R)^{\frac{(6-q)p'}{(3-p')q}\lambda+2\mu}\\
+&CR^{-\frac{(6-r)p'}{(3-p')r}+\frac{(6-r)p'}{(3-p')r}\alpha+2\gamma}(\ln R)^{\frac{(6-r)p'}{(3-p')r}\lambda+2\nu}\\
		+&CR^\frac{-(6-q)(6-r)+2q(6-r)\beta+2r(6-q)\gamma}{q(6-r)+r(6-q)}(\ln R)^{{\frac{2q(6-r)\mu+2r(6-q)\nu}{q(6-r)+r(6-q)}}}.
	\end{align*}
Observing that the exponents of $R$ in the above inequality are all less than or equal to $0$,  we obtain further
\begin{align*}
	f(R)\leq&C(\ln R)^{2\lambda}+C(\ln R)^{2\mu}+C(\ln R)^{2\nu}+C(\ln R)^{\frac{3p}{2p-3}\lambda}
	+C(\ln R)^{\frac{(6-q)p'}{(3-p')q}\lambda+2\mu}\\
	&+C(\ln R)^{\frac{(6-r)p'}{(3-p')r}\lambda+2\nu}
	+C(\ln R)^{{\frac{2q(6-r)\mu+2r(6-q)\nu}{q(6-r)+r(6-q)}}}\\
	\leq&C(\ln R)^{\frac{3p}{2p-3}\lambda}+C(\ln R)^{\frac{(6-q)p'}{(3-p')q}\lambda+2\mu}+C(\ln R)^{\frac{(6-r)p'}{(3-p')r}\lambda+2\nu}+C(\ln R)^{{\frac{2q(6-r)\mu+2r(6-q)\nu}{q(6-r)+r(6-q)}}}.
\end{align*}
We choose
	\begin{align*}
		A=\max\left\{{\frac{3p\lambda}{2p-3},\;\frac{(6-q)p'}{(3-p')q}\lambda+2\mu,\;\frac{(6-r)p'\lambda}{(3-p')r}+2\nu},\;{\frac{2q(6-r)\mu+2r(6-q)\nu}{q(6-r)+r(6-q)}},\;1\right\},
	\end{align*}
	and then we get a concise estimate for $f(R)$:
	\begin{align*}
		f(R)\leq C(\ln R)^{A}.
	\end{align*}
\end{proof}
\end{Lem}

With the help of Lemma \ref{Lem4.3}, we are able to control $K_1$.

\begin{Lem}\label{Lem4.4}
Let the assumptions be the same as those in  {\rm Theorem \ref{main2}}. Let $R_1$ and $A$ be the constants given in {\rm Lemma \ref{Lem4.3}}. Let $\tau$ be the maximum among the following six numbers
\begin{align*}	
&\frac{A}{A+1},\;\frac{1}{2},\;\frac{9-3p}{6-p},\;\frac{6p-3\min\{q,2p'\}(p-1)}{(6-\min\{q,2p'\})p},\;\frac{6p-3\min\{r,2p'\}(p-1)}{(6-\min\{r,2p'\})p},\\
&\frac{6-3\min\{q,2\}}{12-2\min\{q,2\}}+\frac{6-3\min\{r,2\}}{12-2\min\{r,2\}}.
\end{align*}
Then there exists a positive constant $C$ such that
	\begin{align}\label{ine4.22}
	K_1\leq&C\left[R\ln R E'(R)\right]^\tau,\;\forall R\geq R_1.
	\end{align}
\end{Lem}
\begin{proof}
	It is not difficult to verify that $\tau\in(0,1)$ and $A(1-\tau)\leq\tau$.
	Using \eqref{ine4.2} and \eqref{ine4.17}, we have
	\begin{align*}
		K_1=K_1^{1-\tau}\cdot K_1^\tau \leq&\left[Cf(2R)\right]^{1-\tau}\cdot\left[CR E'(R)\right]^{\tau}\notag\\
		\leq& C(\ln R)^{A(1-\tau)}\left[R E'(R)\right]^\tau\\
		\leq&C\left[R\ln R E'(R)\right]^\tau,\notag
	\end{align*}
	where we require $R\geq R_1$.
	\end{proof}

The estimates of the remaining seven terms $K_2$, $K_3$, $\cdots$, $K_8$ will be given in the subsequent seven lemmas.

\begin{Lem}\label{Lem4.5}
Let the assumptions be the same as those in  {\rm Theorem \ref{main2}}. Then there exists a positive constant $C$ such that
\begin{equation}\label{ine4.23}
K_2\leq C[R\ln R E'(R)]^\frac{1}{2},\;\forall R\geq R_1.
\end{equation}
\end{Lem}
\begin{proof}
When $\beta< \frac{3}{q}-\frac{1}{2}$, using \eqref{ine4.2}, for any $R\geq R_1$, we have
\begin{align*}
K_2&\leq CR^{\beta-\left(\frac{3}{q}-\frac{1}{2}\right)}(\ln R)^\mu\|\nabla v\|_{L^2(A_R)}\\
&\leq C\|\nabla v\|_{L^2(A_R)}\\
&\leq C[R E'(R)]^\frac{1}{2}\\
&\leq C[R\ln R E'(R)]^\frac{1}{2}.
\end{align*}
When $\beta=\frac{3}{q}-\frac{1}{2}$, using \eqref{ine4.2}, for any $R\geq R_1$, we have
\begin{align*}
K_2&\leq CR^{\beta-\left(\frac{3}{q}-\frac{1}{2}\right)}(\ln R)^\mu\|\nabla v\|_{L^2(A_R)}\\
   &\leq C(\ln R)^\mu[R E'(R)]^\frac{1}{2}\\
   &\leq C[R\ln R E'(R)]^\frac{1}{2}.
\end{align*}
Here we have used the fact that when $\beta=\frac{3}{q}-\frac{1}{2}$, we require $0\leq \mu\leq \frac{1}{2}$.
\end{proof}

\begin{Lem}\label{Lem4.6}
Let the assumptions be the same as those in {\rm Theorem \ref{main2}}. Assume $E(R)\not\equiv0$. Then there exist two positive constants $R_4>3$ and $C$ such that
\begin{align}\label{ine4.24}
K_3\leq \frac{1}{16}E(R)+C\chi_1(p)\left[R\ln R E'(R)\right]^{\frac{9-3p}{6-p}\chi_1(p)},\;\forall R\geq R_4,
\end{align}
where $\chi_1(p)$ is defined by
\begin{equation*}
\chi_1(p)=
\begin{cases}
1, & \text{ if  }\frac{3}{2}<p<3, \\
0, & \text{ if  }3\leq p\leq\frac{9}{2}.
\end{cases}
\end{equation*}
\end{Lem}
\begin{proof}
	When $p\in\left(\frac{3}{2},3\right)$, using the interpolation inequality, \eqref{ine4.10}, the Sobolev-Poincar\'{e} inequality(see Lemma \ref{Lem2.2}), \eqref{add4.19} and \eqref{ine4.2}, we obtain
	\begin{align}\label{ine4.25}
		K_3&\leq CR^{-1}\|u\|_{L^p(A_R)}\left(\|U\|_{L^p(A_R)}^{\frac{2p-3}{6-p}}\|U\|_{L^6(A_R)}^{\frac{9-3p}{6-p}}\right)^2\notag\\
		&\leq CR^{-1}\|u\|_{L^p(A_R)}^{\frac{3p}{6-p}}\|\nabla u\|_{L^2(A_R)}^{\frac{2(9-3p)}{6-p}}\\
        &\leq CR^{\frac{3p}{6-p}\alpha-1}(\ln R)^{\frac{3p}{6-p}\lambda}[RE'(R)]^\frac{9-3p}{6-p}\notag\\
        &\leq C[R\ln RE'(R)]^\frac{9-3p}{6-p},\notag
	\end{align}
   for any $R\geq R_1$.

	When $p\in[3,\frac{9}{2}]$, by applying the H\"{o}lder inequality and using \eqref{ine4.10}, we obtain
	\begin{align*}
	K_3&\leq CR^{-1}\|u\|_{L^p(A_R)}\left(\|U\|_{L^p(A_R)}CR^{3\left(\frac{1}{2p'}-\frac{1}{p}\right)}\right)^2\\
       &= CR^{2-\frac{9}{p}}\|u\|_{L^p(A_R)}\|U\|_{L^p(A_R)}^2\\
       &\leq CR^{2-\frac{9}{p}}\|u\|_{L^p(A_R)}^3\\
       &=CR^{2-\frac{9}{p}+3\alpha}X_{p,\alpha}^3(R)\\
       &\leq CX_{p,\alpha}^3(R),
	\end{align*}
for any $R\geq R_1$. Hence, we have $\lim\limits_{R\rightarrow+\infty}K_3=0$ at this moment. Since $E(R)\not\equiv0$, in view of the non-decreasing property of $E(R)$, there exists a constant $R_2>R_1$ such that
\begin{align}\label{ine4.26}
\text{$E(R)\geq E(R_2)>0$ for any $R\geq R_2$.}
\end{align}
Considering $\lim\limits_{R\rightarrow+\infty}K_3=0$, there exists a constant $R_3>R_2$ such that
\begin{align}\label{ine4.27}
K_3\leq\frac{1}{16}E(R_2)\leq \frac{1}{16}E(R),\;\forall R\geq R_3.
\end{align}

Denote $R_4=\chi_1(p)R_1+[1-\chi_1(p)]R_3$. Taking \eqref{ine4.25} and \eqref{ine4.27} into consideration together, we find that \eqref{ine4.24} holds.
\end{proof}

\begin{Lem}\label{Lem4.7}
Let the assumptions be the same as those in  {\rm Theorem \ref{main2}}. Assume $E(R)\not\equiv0$.
Then there exist two positive constants $R_6>3$ and $C$ such that
\begin{align}\label{ine4.28}
K_4\leq \frac{1}{16}E(R)+C\chi_2(q)\left[R\ln R E'(R)\right]^\frac{6p-3\min\{q,2p'\}(p-1)}{(6-\min\{q,2p'\})p},\;\forall R\geq R_6,
\end{align}
where $\chi_2(q)$ is defined by
\begin{equation*}
\chi_2(q)=
\begin{cases}
1, & \text{ if  }1\leq q<2p', \\
0, & \text{ if  }2p'\leq q\leq6.
\end{cases}
\end{equation*}
\end{Lem}
\begin{proof}
	When $q\in[1,2p')$, using the interpolation inequality, \eqref{ine4.11} and the Sobolev-Poincar\'{e} inequality, \eqref{add4.19}, \eqref{add4.20} and \eqref{ine4.2}, we obtain
	\begin{align}\label{ine4.29}
		K_4&\leq CR^{-1}\|u\|_{L^{p}(A_R)}\|V\|_{L^q(A_R)}^\frac{(6-2p')q}{(6-q)p'}\|V\|_{L^6(A_R)}^\frac{12p'-6q}{(6-q)p'}\notag\\
		&\leq CR^{-1}\|u\|_{L^{p}(A_R)}\|v\|_{L^q(A_R)}^\frac{(4p-6)q}{(6-q)p}\|\nabla v\|_{L^2(A_R)}^\frac{12p-6q(p-1)}{(6-q)p}\\
        &\leq CR^{\alpha+\frac{(4p-6)q}{(6-q)p}\beta-1}(\ln R)^{\lambda+\frac{(4p-6)q}{(6-q)p}\mu}[RE'(R)]^\frac{6p-3q(p-1)}{(6-q)p}\notag\\
        &\leq C\left[R\ln R E'(R)\right]^\frac{6p-3q(p-1)}{(6-q)p}.\notag
	\end{align}
	
	When $q\in [2p',6]$, using the H\"{o}lder inequality, \eqref{ine4.11}, \eqref{add4.19} and \eqref{add4.20}, we obtain
	\begin{align*}
		K_4&\leq CR^{2-\frac{3}{p}-\frac{6}{q}}\|u\|_{L^p(A_R)}\|V\|_{L^q(A_R)}^2\notag\\
		&\leq CR^{2-\frac{3}{p}-\frac{6}{q}}\|u\|_{L^p(A_R)}\|v\|_{L^q(A_R)}^2\\
        &\leq CR^{2-\frac{3}{p}-\frac{6}{q}+\alpha+2\beta}(\ln R)^{\lambda+2\mu}.\notag
	\end{align*}
Since $2-\frac{3}{p}-\frac{6}{q}+\alpha+2\beta<0$ at this moment, we have $\lim\limits_{R\rightarrow+\infty}K_4=0$. Hence, there exists a constant $R_5>R_2$ such that
\begin{align}\label{ine4.30}
K_4\leq\frac{1}{16}E(R_2)\leq \frac{1}{16}E(R),\;\forall R\geq R_5.
\end{align}

Denote $R_6=\chi_2(q)R_1+[1-\chi_2(q)]R_5$. Taking \eqref{ine4.29} and \eqref{ine4.30} into consideration together, we find that \eqref{ine4.28} holds.
\end{proof}

Following a similar argument as in the proof of Lemma \ref{Lem4.7}, we obtain Lemma \ref{Lem4.8}.

\begin{Lem}\label{Lem4.8}
Let the assumptions be the same as those in  {\rm Theorem \ref{main2}}. Assume $E(R)\not\equiv0$. Then there exist two positive constants $R_7>3$ and $C$ such that
\begin{align*}
K_5\leq \frac{1}{16}E(R)+C\chi_2(r)\left[R\ln R E'(R)\right]^\frac{6p-3\min\{r,2p'\}(p-1)}{(6-\min\{r,2p'\})p},\;\forall R\geq R_7.
\end{align*}
\end{Lem}

\begin{Lem}\label{Lem4.9}
Let the assumptions be the same as those in {\rm Theorem \ref{main2}}. Assume $E(R)\not\equiv0$. Then there exist two positive constants $R_8$ and $C$ such that
\begin{equation*}
K_{6}\leq\frac{1}{16}E(R),\;\forall R\geq R_8.
\end{equation*}
\end{Lem}
\begin{proof}
For $(p,q)\in\left(3,\frac{9}{2}\right]\times[1,2p')$, we have $\alpha\geq0>\frac{3}{p}-1$ and $(\alpha,\beta)\neq\left(\frac{3}{p}-1,\frac{3}{q}-\frac{1}{2}\right)$.
Thus, together with our assumption, we find that $(\alpha,\beta)\neq\left(\frac{3}{p}-1,\frac{3}{q}-\frac{1}{2}\right)$ for any $(p,q)\in\left(\frac{3}{2},\frac{9}{2}\right]\times[1,2p')$.
When $(p,q)\in\left(\frac{3}{2},\frac{9}{2}\right]\times[1,2p')$, it holds that
\begin{align*}
2-\frac{3}{p}-\frac{6}{q}+\alpha+2\beta&\leq 2-\frac{3}{p}-\frac{6}{q}+\left[1-\frac{(4p-6)q}{(6-q)p}\beta\right]+2\beta\\
&= 3-\frac{3}{p}-\frac{6}{q}+\frac{6(p-1)(2p'-q)}{(6-q)p}\beta\\
&\leq3-\frac{3}{p}-\frac{6}{q}+\frac{6(p-1)(2p'-q)}{(6-q)p}\left(\frac{3}{q}-\frac{1}{2}\right)\\
&=0.
\end{align*}
We claim that $2-\frac{3}{p}-\frac{6}{q}+\alpha+2\beta$ can not attain zero, otherwise, we have
\begin{align*}
\begin{cases}
\beta=\frac{3}{q}-\frac{1}{2}\\
\alpha=1-\frac{(4p-6)q}{(6-q)p}\beta=\frac{3}{p}-1,
\end{cases}
\end{align*}
which is impossible.

Combining the above conclusion for $(p,q)\in\left(\frac{3}{2},\frac{9}{2}\right]\times[1,2p')$ and the assumption $\alpha+2\beta<\frac{3}{p}+\frac{6}{q}-2$ for $(p,q)\in\left(\frac{3}{2},\frac{9}{2}\right]\times[2p',6]$, we conclude that
$$
2-\frac{3}{p}-\frac{6}{q}+\alpha+2\beta<0
$$
always holds. Consequently, we have
$$
\lim\limits_{R\rightarrow+\infty}K_6=\lim\limits_{R\rightarrow+\infty}CR^{2-\frac{3}{p}-\frac{6}{q}+\alpha+2\beta}(\ln R)^{\lambda+2\mu}X_{p,\alpha,\lambda}(R)Y^2_{q,\beta,\mu}(R)=0.
$$
Hence, there exists a constant $R_8>R_2$ such that
\begin{align*}
K_6\leq\frac{1}{16}E(R_2)\leq \frac{1}{16}E(R),\;\forall R\geq R_8.
\end{align*}
\end{proof}

\begin{Lem}\label{Lem4.10}
	Let the assumptions be the same as those in {\rm Theorem \ref{main2}}. Assume $E(R)\not\equiv0$.
	There exist two positive constants $R_{10}>3$ and $C$ such that
\begin{align}\label{ine4.31}
K_7\leq \frac{1}{16}E(R)+C\chi_3(q,r)\left[R\ln R E'(R)\right]^{\frac{6-3\min\{q,2\}}{12-2\min\{q,2\}}+\frac{6-3\min\{r,2\}}{12-2\min\{r,2\}}},\;\forall R\geq R_{10},
\end{align}
where $\chi_3(q,r)$ is defined by
\begin{equation*}
\chi_3(q,r)=
\begin{cases}
1, & \text{ if  }(q,r)\in [1,6]\times[1,6] \text{ and }(q,r)\notin[2,6]\times[2,6], \\
0, & \text{ if  }(q,r)\in [2,6]\times[2,6].
\end{cases}
\end{equation*}
\end{Lem}
\begin{proof}
Similarly to \eqref{ine3.24}, \eqref{ine3.26}, \eqref{ine3.28} and \eqref{ine3.29} in Lemma \ref{Lem2.9}, we can obtain
\begin{itemize}
		\item[(i)] When $1\leq q<2, 1\leq r<2$, it holds that
		\begin{equation*}
			K_{7}
			\leq CR^{-1}\|V\|_{L^q(A_R)}^{\frac{2q}{6-q}}\|\Theta\|_{{L^r}(A_R)}^{\frac{2r}{6-r}}\|V\|_{L^6(A_R)}^{\frac{3(2-q)}{6-q}}\|\Theta\|_{L^6(A_R)}^\frac{3(2-r)}{6-r};
		\end{equation*}
		\item[(ii)]  When $1\leq q<2$, $2\leq r\leq6$, it holds that
		\begin{equation*}
			K_{7}
			\leq CR^{\frac{1}{2}-\frac{3}{r}}\|V\|_{L^q(A_R)}^{\frac{2q}{6-q}} \|\Theta\|_{L^{r}(A_R)}\|V\|_{L^6(A_R)}^{\frac{3(2-q)}{6-q}};
		\end{equation*}
	\item[(iii)] When $2\leq q\leq6$, $1\leq r<2$, it holds that
	\begin{equation*}
		K_{7}
		\leq CR^{\frac{1}{2}-\frac{3}{q}}\|V\|_{L^{q}(A_R)}\|\Theta\|_{{L^r}(A_R)}^{\frac{2r}{6-r}}\|\Theta\|_{{L^6}(A_R)}^\frac{3(2-r)}{6-r};
	\end{equation*}
	\item[(iv)]  When $2\leq q\leq6$, $2\leq r\leq 6$, it holds that
\begin{equation*}
	K_{7}
	\leq CR^{2-\frac{3}{q}-\frac{3}{r}}\|V\|_{L^{q}(A_R)}\|\Theta\|_{L^{r}(A_R)}.
\end{equation*}
	\end{itemize}

Combining \eqref{ine4.11}, \eqref{ine4.12}, the Sobolev-Poincar\'{e} inequality, \eqref{add4.20}, \eqref{add4.21} and \eqref{ine4.2}, and using Assumptions we conclude
\begin{itemize}
		\item[(i)] When $1\leq q<2, 1\leq r<2$, it holds that
		\begin{align}\label{ine4.32}
		K_{7}&\leq CR^{-1}\|v\|_{L^q(A_R)}^{\frac{2q}{6-q}}\|\theta\|_{{L^r}(A_R)}^{\frac{2r}{6-r}}\|\nabla v\|_{L^2(A_R)}^{\frac{3(2-q)}{6-q}}\|\nabla\theta\|_{L^2(A_R)}^\frac{3(2-r)}{6-r}\notag\\
             &\leq CR^{\frac{2q}{6-q}\beta+\frac{2r}{6-r}\gamma-1}(\ln R)^{\frac{2q}{6-q}\mu+\frac{2r}{6-r}\nu}[RE'(R)]^{\frac{3(2-q)}{2(6-q)}+\frac{3(2-r)}{2(6-r)}}\\
             &\leq C[R\ln RE'(R)]^{\frac{3(2-q)}{2(6-q)}+\frac{3(2-r)}{2(6-r)}};\notag
		\end{align}
		\item[(ii)]  When $1\leq q<2$, $2\leq r\leq6$, it holds that
		\begin{align}\label{ine4.33}
			K_{7}&\leq CR^{\frac{1}{2}-\frac{3}{r}}\|v\|_{L^q(A_R)}^{\frac{2q}{6-q}} \|\theta\|_{L^{r}(A_R)}\|\nabla v\|_{L^2(A_R)}^{\frac{3(2-q)}{6-q}}\notag\\
                 &\leq CR^{\frac{2q}{6-q}\beta+\gamma+\frac{1}{2}-\frac{3}{r}}(\ln R)^{\frac{2q}{6-q}\mu+\nu}[RE'(R)]^\frac{3(2-q)}{2(6-q)}\\
                 &\leq C[R\ln RE'(R)]^\frac{3(2-q)}{2(6-q)};\notag
		\end{align}
	\item[(iii)] When $2\leq q\leq6$, $1\leq r<2$, it holds that
	\begin{align}\label{ine4.34}
		K_{7}&\leq CR^{\frac{1}{2}-\frac{3}{q}}\|v\|_{L^{q}(A_R)}\|\theta\|_{{L^r}(A_R)}^{\frac{2r}{6-r}}\|\nabla\theta\|_{{L^2}(A_R)}^\frac{3(2-r)}{6-r}\notag\\
             &\leq CR^{\beta+\frac{2r}{6-r}\gamma+\frac{1}{2}-\frac{3}{q}}(\ln R)^{\mu+\frac{2r}{6-r}\nu}[RE'(R)]^\frac{3(2-r)}{2(6-r)}\\
             &\leq C[R\ln RE'(R)]^\frac{3(2-r)}{2(6-r)};\notag
	\end{align}
	\item[(iv)]  When $2\leq q\leq6$, $2\leq r\leq 6$, it holds that
\begin{align*}
	K_{7}\leq CR^{2-\frac{3}{q}-\frac{3}{r}}\|v\|_{L^{q}(A_R)}\|\theta\|_{L^{r}(A_R)}\leq CR^{2-\frac{3}{q}-\frac{3}{r}+\beta+\gamma}(\ln R)^{\mu+\nu}.
\end{align*}
	\end{itemize}
Since $2-\frac{3}{q}-\frac{3}{r}+\beta+\gamma<0$ if $2\leq q\leq6$ and $2\leq r\leq 6$, we have $\lim\limits_{R\rightarrow+\infty}K_7=0$ at this moment. Hence, there exists a constant $R_9>R_2$ such that
\begin{align}\label{ine4.35}
K_7\leq\frac{1}{16}E(R_2)\leq \frac{1}{16}E(R),\;\forall R\geq R_9.
\end{align}

Denote $R_{10}=\chi_3(q,r)R_1+[1-\chi_3(q,r)]R_9$. Taking \eqref{ine4.32}, \eqref{ine4.33}, \eqref{ine4.34} and \eqref{ine4.35} into consideration together, we find that \eqref{ine4.31} holds.
\end{proof}

\begin{Lem}\label{Lem4.11}
Let the assumptions be the same as those in {\rm Theorem \ref{main2}}. Assume $E(R)\not\equiv0$.
Then there exist two positive constants $R_{11}$ and $C$ such that
\begin{equation*}
K_8\leq\frac{1}{16}E(R),\;\forall R\geq R_{11}.
\end{equation*}
\end{Lem}
\begin{proof}
Suppose that $q\in\left[1,2\right)$ and $r\in[1,2)$. At this moment, we have
\begin{align*}
2-\frac{3}{q}-\frac{3}{r}+\beta+\gamma&\leq2-\frac{3}{q}-\frac{3}{r}+\frac{6-q}{2q}\left(1-\frac{2r}{6-r}\gamma\right)+\gamma\\
&=\frac{3}{2}-\frac{3}{r}+\frac{6(q-r)}{(6-r)q}\gamma.
\end{align*}
When $q\leq r$, it is obvious that $2-\frac{3}{q}-\frac{3}{r}+\beta+\gamma<0$. When $q>r$, we find
\begin{align*}
2-\frac{3}{q}-\frac{3}{r}+\beta+\gamma&\leq\frac{3}{2}-\frac{3}{r}+\frac{6(q-r)}{(6-r)q}\cdot\left(\frac{3}{r}-\frac{1}{2}\right)=\frac{3}{2}-\frac{3}{q}<0.
\end{align*}

Suppose that $q\in\left[1,2\right)$ and $r\in[2,6]$. At this moment, we have
\begin{align*}
2-\frac{3}{q}-\frac{3}{r}+\beta+\gamma&\leq2-\frac{3}{q}-\frac{3}{r}+\beta+\left(\frac{6-r}{2r}-\frac{2q}{6-q}\beta\right)\\
&=\frac{3}{2}-\frac{3}{q}+\frac{6-3q}{6-q}\beta\\
&\leq\frac{3}{2}-\frac{3}{q}+\frac{6-3q}{6-q}\cdot\left(\frac{3}{q}-\frac{1}{2}\right)\\
&=0.
\end{align*}
We claim that $2-\frac{3}{q}-\frac{3}{r}+\beta+\gamma$ can not attain zero, otherwise, we have
\begin{align*}
\begin{cases}
\beta=\frac{3}{q}-\frac{1}{2}\\
\gamma=\frac{6-r}{2r}-\frac{2q}{6-q}\beta=\frac{3}{r}-\frac{3}{2}\leq0.
\end{cases}
\end{align*}
Using the fact $\gamma\geq0$, we derive $(r,\beta,\gamma)=(2,\frac{3}{q}-\frac{1}{2},0)$, which is impossible.

Suppose that $q\in[2,6]$ and $r\in\left[1,2\right)$. By the same method as in the previous case, we obtain
\begin{align}\label{ine4.36}
2-\frac{3}{q}-\frac{3}{r}+\beta+\gamma<0.
\end{align}

Suppose that $q\in[2,6]$ and $r\in[2,6]$. At this moment, Assumption \ref{a1.2}(iii)  becomes
$$
\beta+\gamma<\frac{3}{q}+\frac{3}{r}-2.
$$

In conclusion, \eqref{ine4.36} always holds. As a direct consequence, we get
\begin{equation*}
	\lim\limits_{R\rightarrow+\infty}K_{8}=\lim\limits_{R\rightarrow+\infty}CR^{2-\frac{3}{q}-\frac{3}{r}+\beta+\gamma}(\ln R)^{\mu+\nu}Y_{q,\beta,\mu}(R)Z_{r,\gamma,\nu}(R)=0.
\end{equation*}
Thus, there exists a constant $R_{11}>R_2$ such that
\begin{align*}
K_8\leq\frac{1}{16}E(R_2)\leq \frac{1}{16}E(R),\;\forall R\geq R_{11}.
\end{align*}
\end{proof}

Now we are ready to prove Theorem \ref{main2}.

\begin{proof}[{\bf Proof of Theorem \ref{main2}}]
Firstly, we claim that $E(R)\equiv0$, and we prove this by contradiction.
Assume that $E(R)\not\equiv0$. Then \eqref{ine4.26} holds. Using Lemmas \ref{Lem4.2}, \ref{Lem4.4}, \ref{Lem4.5}, \ref{Lem4.6}, \ref{Lem4.7}, \ref{Lem4.8}, \ref{Lem4.9}, \ref{Lem4.10} and \ref{Lem4.11}, we derive
\begin{align}\label{ine3.37}
E(R)\leq&\frac{1}{2}E(R)+C\left[R\ln R E'(R)\right]^\tau+C[R\ln R E'(R)]^\frac{1}{2}\notag\\
&+C\chi_1(p)\left[R\ln R E'(R)\right]^{\frac{9-3p}{6-p}\chi_1(p)}+C\chi_2(q)\left[R\ln R E'(R)\right]^\frac{6p-3\min\{q,2p'\}(p-1)}{(6-\min\{q,2p'\})p}\\
&+C\chi_2(r)\left[R\ln R E'(R)\right]^\frac{6p-3\min\{r,2p'\}(p-1)}{(6-\min\{r,2p'\})p}\notag\\
&+C\chi_3(q,r)\left[R\ln R E'(R)\right]^{\frac{6-3\min\{q,2\}}{12-2\min\{q,2\}}+\frac{6-3\min\{r,2\}}{12-2\min\{r,2\}}},\notag
\end{align}
where $R\geq R_0:=\max\{R_4,R_6,R_7,R_8,R_{10},R_{11}\}$. By the Young inequality, we obtain
\begin{align}\label{ine3.38}
	C[R\ln R E'(R)]^\frac{1}{2}&\leq \frac{1}{20}E(R_2)+C[R\ln R E'(R)]^\tau,
\end{align}
\begin{align}\label{ine3.39}
	C\chi_1(p)[R\ln R E'(R)]^{\frac{9-3p}{6-p}\chi_1(p)}&\leq \frac{1}{20}E(R_2)+C[R\ln R E'(R)]^\tau,
\end{align}
\begin{align}\label{ine3.40}
	C\chi_2(q)[R\ln R E'(R)]^\frac{6p-3\min\{q,2p'\}(p-1)}{(6-\min\{q,2p'\})p}&\leq \frac{1}{20}E(R_2)+C[R\ln R E'(R)]^\tau,
\end{align}
\begin{align}\label{ine3.41}
	C\chi_2(r)[R\ln R E'(R)]^\frac{6p-3\min\{r,2p'\}(p-1)}{(6-\min\{r,2p'\})p}&\leq \frac{1}{20}E(R_2)+C[R\ln R E'(R)]^\tau,
\end{align}
\begin{align}\label{ine3.42}
	C\chi_3(q,r)\left[R\ln R E'(R)\right]^{\frac{6-3\min\{q,2\}}{12-2\min\{q,2\}}+\frac{6-3\min\{r,2\}}{12-2\min\{r,2\}}}&\leq \frac{1}{20}E(R_2)+C[R\ln R E'(R)]^\tau.
\end{align}
Combining \eqref{ine3.37}, \eqref{ine3.38}, \eqref{ine3.39}, \eqref{ine3.40}, \eqref{ine3.41} and \eqref{ine3.42}, we obtain
\begin{align*}
E(R)&\leq \frac{1}{2}E(R)+\frac{1}{4}E(R_2)+C[R\ln R E'(R)]^\tau\\
&\leq \frac{3}{4}E(R)+C[R\ln R E'(R)]^\tau,
\end{align*}
	which implies
	\begin{align*}
		E(R)
		&\leq C\left[R\ln RE'(R)\right]^\tau.
	\end{align*}
	Consequently, it follows that
	\begin{align*}
		\ln\ln R-\ln\ln R_0=\int_{R_0}^{R}\frac{1}{\rho\ln\rho}d\rho\leq\int_{R_0}^{R}\frac{CE'(\rho)}{E^{\frac{1}{\tau}}(\rho)}d\rho\leq CE(R_0)^{1-\frac{1}{\tau}}<+\infty.
	\end{align*}
	Letting $R\rightarrow+\infty$, the above inequality leads to a contradiction. Therefore, $E(R)\equiv0$.
	
	Thanks to the simple inequality
	$$\|\nabla u\|_{L^2(B_R)}^2+\|\nabla v\|_{L^2(B_R)}^2+\|\nabla \theta\|_{L^2(B_R)}^2\leq E(R),$$
	we conclude that $u,v$ are constant vectors and $\theta$ is a constant. Finally, the limit conditions in $\mathrm{(B1)}$ or $\mathrm{(B2)}$, and the following three equalities
\begin{align*}
&|u|=CR^{\alpha-\frac{3}{p}}(\ln R)^\lambda\cdot X_{p,\alpha,\lambda}(R),\\
&|v|=CR^{\beta-\frac{3}{q}}(\ln R)^\mu\cdot Y_{q,\beta,\mu}(R),\\
&|\theta|=CR^{\gamma-\frac{3}{r}}(\ln R)^\nu\cdot Z_{r,\gamma,\nu}(R)
\end{align*}
force $u,v,\theta$ to be zero.
\end{proof}

\section{Proofs of Theorem \ref{main3} and Theorem \ref{main4}}\label{sec5}
Let $\Theta$ be defined by \eqref{eq4.3}. Denote
\begin{align*}
\overline{Z}_{r,\gamma}(R)=R^{-\gamma}\left\|\Theta\right\|_{L^r\left(A_{R}\right)},\;\overline{Z}_{r,\gamma,\nu}(R)=R^{-\gamma}(\ln R)^{-\nu}\left\|\Theta\right\|_{L^r\left(A_{R}\right)}.
\end{align*}
We observe that all terms involving $\theta$ in \eqref{equ1.1} are derivative terms.
Thanks to this special structure of \eqref{equ1.1}, $(u,\pi,v,\Theta)$ also satisfies \eqref{equ1.1}.
By a similar argument to the proofs of Theorem \ref{main1} and Theorem \ref{main2}, we can establish the following two results.

\begin{Prop}\label{Prop5.1}
Let $(u,\pi,v,\theta)$ be a smooth solution of \eqref{equ1.1}. Suppose the parameters satisfy {\rm Assumptions \ref{a1.1}} and {\rm\ref{a1.2}}. Furthermore, assume that one of the following assumptions holds
\begin{align*}
&\mathrm{(i)}\;p\in\left[3,\frac{9}{2}\right],\;\liminf\limits_{R\rightarrow+\infty}X_{p,\alpha}(R)=0,\;\limsup\limits_{R\rightarrow+\infty}\left[Y_{q,\beta}(R)+\overline{Z}_{r,\gamma}(R)\right]<+\infty;\\
&\mathrm{(ii)}\;p\in\left(\frac{3}{2},3\right),\;\liminf\limits_{R\rightarrow+\infty}\left[X_{p,\alpha}(R)+Y_{q,\beta}(R)+\overline{Z}_{r,\gamma}(R)\right]<+\infty.
\end{align*}
Then $u=v=0$ and $\theta$ is a constant.
\end{Prop}

\begin{Prop}\label{Prop5.2}
Let $(u,\pi,v,\theta)$ be a smooth solution of \eqref{equ1.1}. Suppose the parameters  satisfy {\rm Assumptions \ref{a1.1}, \ref{a1.2}, \ref{a1.3}, \ref{a1.4}} and {\rm\ref{a1.5}}.
Furthermore, assume that one of the following assumptions holds
\begin{align*}
&\mathrm{(i)}\;p\in\left[3,\frac{9}{2}\right],\;\lim\limits_{R\rightarrow+\infty}X_{p,\alpha}(R)=0,\;\limsup\limits_{R\rightarrow+\infty}\left[Y_{q,\beta,\mu}(R)+\overline{Z}_{r,\gamma,\nu}(R)\right]<+\infty;\\
&\mathrm{(ii)}\;p\in\left(\frac{3}{2},3\right),\;\limsup\limits_{R\rightarrow+\infty}\left[X_{p,\alpha,\lambda}(R)+Y_{q,\beta,\mu}(R)+\overline{Z}_{r,\gamma,\nu}(R)\right]<+\infty.
\end{align*}
Then $u=v=0$ and $\theta$ is a constant.
\end{Prop}

\begin{proof}[{\bf Proofs of Theorem \ref{main3} and Theorem \ref{main4}}]
Theorem \ref{main3} and Theorem \ref{main4} are direct consequences of the Poincar{\'{e}}-Sobolev inequality
$$\left\|\Theta\right\|_{L^\frac{3r}{3-r}(A_R)}\leq C\|\nabla\theta\|_{L^r (A_R)},$$
Proposition \ref{Prop5.1} and Proposition \ref{Prop5.2}, respectively.
\end{proof}

\begin{Rem}
Similarly to {\rm Remark \ref{Rem3.1}}, some strict inequalities in our assumptions of {\rm Theorem \ref{main3}} can be weakened in some special endpoint cases.
\begin{itemize}
\item[(i)] When $(p,q)\in(\frac{3}{2},3)\times[2p',6]$, the inequality in {\rm Assumption \ref{a1.7}(i)}  becomes
\begin{equation*}
\alpha+2\beta<\frac{3}{p}+\frac{6}{q}-2,
\end{equation*}
which can be replaced by the equality
$$
\alpha+2\beta=\frac{3}{p}+\frac{6}{q}-2,
$$
but the price is that we need to assume in addition that
$$\liminf\limits_{R\rightarrow+\infty}X_{p,\alpha}(R)=0,\;\limsup\limits_{R\rightarrow+\infty}[Y_{q,\beta}(R)+Z'_{r,\gamma}(R)]<+\infty,\text{ or }$$
$$\limsup\limits_{R\rightarrow+\infty}[X_{p,\alpha}(R)+Z'_{r,\gamma}(R)]<+\infty,\;\liminf\limits_{R\rightarrow+\infty}Y_{q,\beta}(R)=0.$$
\item[(ii)]
When $(p,r)\in(\frac{3}{2},3)\times[\frac{6p}{5p-3},2]$, the inequality in {\rm Assumption \ref{a1.7}(ii)} becomes
\begin{equation*}
\alpha+2\gamma<\frac{3}{p}+\frac{6}{r}-4,
\end{equation*}
which can be replaced by the equality
$$
\alpha+2\gamma=\frac{3}{p}+\frac{6}{r}-4,
$$
but the price is that we need to assume in addition that
$$\liminf\limits_{R\rightarrow+\infty}X_{p,\alpha}(R)=0,\;\limsup\limits_{R\rightarrow+\infty}[Y_{q,\beta}(R)+Z'_{r,\gamma}(R)]<+\infty,\text{ or }$$
$$\limsup\limits_{R\rightarrow+\infty}[X_{p,\alpha}(R)+Y_{q,\beta}(R)]<+\infty,\;\liminf\limits_{R\rightarrow+\infty}Z'_{r,\gamma}(R)=0.$$
\item[(iii)]
When $(q,r)\in[2,6]\times[\frac{6}{5},2]$, the inequality in {\rm Assumption \ref{a1.7}(iii)} becomes
\begin{equation*}
\beta+\gamma<\frac{3}{q}+\frac{3}{r}-3,
\end{equation*}
which can be replaced by the equality
$$\beta+\gamma=\frac{3}{q}+\frac{3}{r}-3,$$
but the price is that we need to assume in addition that
\begin{itemize}
\item[(1)] when $p\in[3,\frac{9}{2}]$, we assume
$$\liminf\limits_{R\rightarrow+\infty}Y_{q,\beta}(R)=0\;\text{ or }\liminf\limits_{R\rightarrow+\infty}Z'_{r,\gamma}(R)=0.$$
\item[(2)] when $p\in(3,\frac{3}{2})$, we assume
$$\liminf\limits_{R\rightarrow+\infty}Y_{q,\beta}(R)=0,\;\limsup\limits_{R\rightarrow+\infty}[X_{p,\alpha}(R)+Z'_{r,\gamma}(R)]<+\infty,\text{ or }$$
$$\limsup\limits_{R\rightarrow+\infty}[X_{p,\alpha}(R)+Y_{q,\beta}(R)]<+\infty,\;\liminf\limits_{R\rightarrow+\infty}Z'_{r,\gamma}(R)=0.$$
\end{itemize}
\item[(iv)] When $(p,q)\in(\frac{3}{2},3)\times[2p',6]$, {\rm Assumption \ref{a1.7}(i)}
can be replaced by $\alpha=\frac{3}{p}-1$ and  $\beta=\frac{3}{q}-\frac{1}{2}$.
\item[(v)] When $(p,r)\in(\frac{3}{2},3)\times[\frac{6p}{5p-3},2]$, {\rm Assumption \ref{a1.7}(ii)}
can be replaced by $\alpha=\frac{3}{p}-1$ and $\gamma=\frac{3}{r}-\frac{3}{2}$.
\item[(vi)] When $(q,r)\in[2,6]\times[\frac{6}{5},2]$, {\rm Assumption \ref{a1.7}(iii)} can be replaced by
$(\beta,\gamma)=(0,\frac{3}{r}-\frac{3}{2})$ with $q=2$, or $(\beta,\gamma)=(\frac{3}{q}-\frac{1}{2},0)$ with $r=\frac{6}{5}$.
\end{itemize}
\end{Rem}

\begin{Rem}
Similarly to {\rm Remark \ref{Rem3.2}}, {\rm Theorem \ref{main3}} includes $17$ cases.

\end{Rem}

\subsection*{Acknowledgements.}
This work was supported by Science Foundation for the Excellent Youth Scholars of Higher Education of Anhui Province Grant No. 2023AH030073, Natural Science Foundation of the Higher Education Institutions of Anhui Province Grant No. 2023AH050478 and Domestic Study and Research Support Program for Young Key Teachers of Higher Education of Anhui Province Grant No. JNFX2025027.

\subsection*{Data Availability Statement}
No data was used for the research described in the article.

\subsection*{Conflict of Interest Statement}
The authors have no relevant financial or non-financial interests to disclose.

 \vspace {0.1cm}

\begin {thebibliography}{DUMA}

\bibitem{BGWX25} J. Bang, C. Gui, Y. Wang, C. Xie, Liouville-type theorems for steady solutions to the Navier-Stokes system in a slab, J. Fluid Mech. 1005 (2025), Paper No. A6, 35 pp.

\bibitem{CPZ20} B. Carrillo, X. Pan, Q.S. Zhang, Decay and vanishing of some axially symmetric D-solutions of the Navier-Stokes equations, J. Funct. Anal. 279(1) (2020), 108504, 49 pp.

\bibitem{CPZZ20} B. Carrillo, X. Pan, Q.S. Zhang, N. Zhao, Decay and vanishing of some D-solutions of the Navier-Stokes equations, Arch. Ration. Mech. Anal. 237(3) (2020) 1383-1419.

\bibitem{Chae14} D. Chae, Liouville-type theorems for the forced Euler equations and the Navier-Stokes equations, Comm. Math. Phys. 326(1) (2014) 37-48.

\bibitem{Chae25} D. Chae, On the Liouville type theorems for the stationary Navier-Stokes equations in $\mathbb{R}^3$, J. Differential Equations 445 (2025), Paper No. 113597, 17 pp.

\bibitem{Chae26} D. Chae, Liouville type theorems for the stationary Navier-Stokes equations in $\mathbb{R}^3$, Comm. Math. Phys. 407 (2026), no. 3, Paper No. 53, 11 pp.

\bibitem{CL24} D. Chae, J. Lee, On Liouville type results for the stationary MHD in $\mathbb{R}^3$, Nonlinearity 37(9) (2024), Paper No. 095006, 15 pp.

\bibitem{CW16} D. Chae, J. Wolf, On Liouville type theorems for the steady Navier-Stokes equations in $\mathbb{R}^{3}$, J. Differ. Equ.  261 (2016) 5541-5560.

\bibitem{CW19} D. Chae, J. Wolf, On Liouville type theorem for the stationary Navier-Stokes equations, Calc. Var. Partial Differ. Equ. 58(3) (2019), Paper No. 111, 11 pp.

\bibitem{DJL21} D. Chamorro, O. Jarr\'{\i}n, P.-G. Lemari\'{e}-Rieusset, Some Liouville theorems for stationary Navier-Stokes equations in Lebesgue and Morrey spaces, Ann. Inst. H. Poincar\'{e} C Anal. Non Lin\'{e}aire 38(3) (2021) 689-710.

\bibitem{CIY23} Y. Cho, H. In, M. Yang, An improved Liouville-type theorem for the stationary tropical climate model, arXiv:2312.17441.

\bibitem{CIY24} Y. Cho, H. In, M. Yang, New Liouville-type theorem for the stationary tropical climate model, Appl. Math. Lett. 153 (2024), Paper No. 109039, 5 pp.

\bibitem{CNY24} Y. Cho, J. Neustupa, M. Yang, New Liouville type theorems for the stationary Navier-Stokes, MHD, and Hall-MHD equations, Nonlinearity 37(3) (2024), Paper No. 035007, 22 pp.

\bibitem{CY25} Y. Cho, M. Yang, Logarithmic improvement of a Liouville-type theorem for the stationary Navier-Stokes equations, preprint, arXiv:2501.04372.

\bibitem{CoY26} M.P. Coiculescu, J. Yang, Conditional Liouville theorems for the Navier-Stokes equations, Calc. Var. Partial Differential Equations 65 (2026), no. 1, Paper No. 22, 28 pp.

\bibitem{FW21} H. Ding, F. Wu, The Liouville theorems for 3D stationary tropical climate model, Math. Methods Appl. Sci. 44 (18) (2021) 14437-14450.

\bibitem{HD24} H. Ding, F. Wu, On the Liouville theorem and energy conservation of the tropical climate model, Bull. Korean Math. Soc. 61(5) (2024) 1413-1435.

\bibitem{Evans} L.C. Evans, Partial differential equations, Second edition. Graduate Studies in Mathematics, 19. American Mathematical Society, Providence, RI, 2010. xxii+749 pp.

\bibitem{FMP04} D. Frierson, A. Majda, O. Pauluis, Large scale dynamics of precipitation fronts in the tropical atmosphere, a novel relaxation limit, Commun. Math. Sci. 2 (2004) 591-626.

\bibitem{Galdi} G.P. Galdi, An introduction to the Mathematical Theory of the Navier-Stokes Equations: Steady-State Problems, 2nd edn., Springer Monographs in Mathematics, Springer, New York, 2011.

\bibitem{Giaquinta} M. Giaquinta, Multiple Integrals in the Calculus of Variations and Nonlinear Elliptic Systems, Annals of Mathematics Studies, 105, Princeton Univ. Press, Princeton, NJ, 1983.

\bibitem{Giusti} E. Giusti, Direct methods in the calculus of variations. World Sci. Publ., River Edge, NJ, 2003.

\bibitem{KNSS09} G. Koch, N. Nadirashvili,  G. Seregin, V. $\mathrm{\check{S}}$ver\'{a}k, Liouville theorems for the Navier-Stokes equations and applications, Acta Math. 203(1) (2009) 83-105.

\bibitem{KTW17} H. Kozono, Y. Terasawa, Y. Wakasugi,  A remark on Liouville-type theorems for the stationary Navier-Stokes equations in three space dimensions, J. Funct. Anal. 272(2) (2017) 804-818.

\bibitem{Majda03} A. Majda, Introduction to PDEs and Waves for the Atmosphere and Ocean (Courant Lecture Notes in Mathematics), vol. 9. American Mathematical Society, Providence, 2003.

\bibitem{MB03} A. Majda, J. Biello, The nonlinear interaction of barotropic and equatorial baroclinic Rossby waves, J. Atmos. Sci. 60 (2003) 1809-1821.

\bibitem{Seregin16} G. Seregin, Liouville type theorem for stationary Navier-Stokes equations, Nonlinearity 29(8) (2016) 2191-2195.

\bibitem{Seregin18} G. Seregin, Remarks on Liouville type theorems for steady-state Navier-Stokes equations, Algebra i Analiz   30(2) (2018) 238-248;  reprinted in  St. Petersburg Math. J.  30(2)  (2019) 321-328.

\bibitem{SW19} G. Seregin, W. Wang, Sufficient conditions on Liouville type theorems for the 3D steady Navier-Stokes equations, Algebra i Analiz 31(2) (2019) 269-278; reprinted in St. Petersburg Math. J. 31(2) (2020) 387-393.

\bibitem{Tsai21} T.P. Tsai, Liouville type theorems for stationary Navier-Stokes equations, Partial Differ. Equ. Appl. 2(1) (2021), Paper No. 10, 20 pp.

\bibitem{ZZB25} Z. Zhang, New Liouville type theorems for 3D steady incompressible MHD equations and Hall-MHD equations, preprint, arXiv:2503.13202v3.

\end{thebibliography}

\end {document}